\documentclass[preprint,12pt]{article}

\usepackage{amsthm}
\usepackage{amssymb}
\usepackage{amsfonts}
\usepackage{tabulary}
\usepackage{amsmath}
\usepackage{textcomp}
\usepackage{multirow}
\usepackage{hyperref}

\newtheorem{theorem}{Theorem}[section]

\newtheorem{corollary}[theorem]{Corollary}

\newtheorem{example}[theorem]{Example}
\newtheorem{lemma}[theorem]{Lemma}
\newtheorem{proposition}[theorem]{Proposition}
\newtheorem{remark}[theorem]{Remark}

\begin{document}

\title{Totally umbilic null hypersurfaces in generalized Robertson-Walker spaces\footnote{This paper was supported in part by Grupo Junta de Andaluc\'ia  FQM-324.}}
\author{M. Guti\'{e}rrez\footnote{The first author was supported in part by MEYC-FEDER Grant MTM2013-41768-P.} \\ Dep. \'{A}lgebra, Geometr\'{\i}a y 
Topolog\'{\i}a. Universidad de M\'{a}laga.
\and B. Olea \\ Dep Matem\'atica Aplicada. Universidad de M\'alaga.}
\maketitle

\begin{abstract}
We show that there is a correspondence between totally umbilic null
hypersurfaces in generalized Robertson-Walker spaces and twisted
decompositions of the fibre. This allows us to prove that nullcones are the
unique totally umbilic null hypersurfaces in the closed Friedmann Cosmological
model. We also apply this kind of ideas to static spaces, in particular to
Reissner-Nordstr\"{o}m and Schwarzschild exterior spacetimes.
\end{abstract}

\textbf{keyword:} null hypersurface, totally
umbilic hypersurfaces, twisted products, generalized Robertson-Walker space,
static space.

\textbf{MSC} 53C50,  53C80, 53B30.



\section{Introduction}

A hypersurface in a Lorentzian manifold is null if the induced metric tensor
is degenerate on it. There is an increasing interest on these hypersurfaces
both from a physical and a geometrical point of view. Black hole horizons are
one of the most remarkable examples \cite{Ast99,GouJar06}. On the other hand,
nullcones play a central role in causality theory and its regularity is of key
importance in the propagation properties of linear and nonlinear waves,
\cite{KlaRod2008}. Specific techniques are needed to study these hypersurfaces
since it is not possible to define an orthogonal projection over them, so
neither the induced connection nor the second fundamental form can be defined
in the usual way.

In this paper, we focus on geometrical properties of null hypersurfaces in a
generalized Robertson-Walker space. The main result is Theorem
\ref{umbilicGRW}, where we show that a totally umbilic null hypersurface gives
rise to a local decomposition of the fibre as a twisted product and viceversa,
providing a deep insight of twisted decompositions in Lorentzian geometry. For
example, not all manifold admits such a decomposition \cite{GutOle12,PonRec93}%
, hence there are spacetimes that do not admit totally umbilic null
hypersurfaces. Moreover, given a totally umbilic null hypersurface, we can
construct a dual one, considering the same induced twisted decomposition in
the fibre but reversing the base. In particular, totally umbilic null
hypersurfaces through each point in generalized Robertson-Walker spaces appear
in pairs. Surprisingly, there are cases in which the dual construction is not
trivial, see Example \ref{Ej2}.

One of the most important examples of null hypersurfaces are nullcones, so we
dedicate Section \ref{seccionconos} to study them. We show that nullcones in
Robertson-Walker spaces are totally umbilic. Nullcones with this property in
generalized Robertson-Walker spaces have special importance because if there
is a null geodesic starting at the vertex of a totally umbilic nullcone and it
has a conjugate point along it, its multiplicity is maximum, Proposition
\ref{Jacobi}. This is potentially interesting in Cosmology and extends a
previous result for Robertson-Walker spaces, \cite{FloSan00}. We also give
necessary and sufficient conditions for a null hypersurface to be an open set
of a nullcone, which, jointly with Theorem \ref{umbilicGRW}, allows us to show
in Section \ref{seccionumbilica} that any totally umbilic null hypersurface in
a Robertson-Walker space $I\times_{f}\mathbb{S}^{n-1}$ with $\int_{I}\frac
{1}{f}>\pi$ is an open set of a nullcone, Theorem \ref{teoremaintegralf}. As a
corollary, we can apply this theorem to the closed Friedmann Cosmological model.

Finally, in Section \ref{seccionestatico}, we adapt Theorem \ref{umbilicGRW}
to the case of standard static spaces and we apply it to an important family
of static spacetimes. We prove a uniqueness result for dual pairs of totally
umbilic null hypersurfaces in Reissner-Nordstr\"{o}m and De
Sitter-Schwarzschild spacetimes.

\section{Preliminaries}

Take $I$ an open interval of $\mathbb{R}$, $f\in C^{\infty}(I)$ a positive
function and $(F,g_{F})$ a Riemannian manifold. The manifold $I\times F$
furnished with the Lorentzian metric $g=-dt^{2}+f(t)^{2}g_{F}$ is called a
generalized Robertson-Walker (GRW) space and is denoted by $I\times_{f}F$.
When $F$ has constant curvature, it is called a Robertson-Walker (RW) space
and if $f\equiv1$, then it is simply denoted by $I\times F$. The vector field
$\zeta=f\partial_{t}$ is timelike, closed and conformal. It locally
characterizes these spaces and, under certain conditions, it is the unique
vector field with these properties, \cite{GutOle09}. If we consider the same
construction as above, but with $dt^{2}$ instead of $-dt^{2}$, the resulting
product metric is called warped product and if we consider a positive function
$f\in C^{\infty}(I\times F)$ instead of $f\in C^{\infty}(I)$, then it is
called a twisted product.

Given $L$ a null hypersurface in a GRW space, we take the unique null vector
field $\xi\in\mathfrak{X}(L)$ such that $g(\zeta,\xi)=1$ and we call
$\mathcal{S}$ the distribution in $L$ given by $\zeta^{\perp}\cap TL$.

The vector field $\xi$ is geodesic and $\nabla_{X}\xi$ is a section of $TL$
for all $X\in\mathcal{S}$. In fact, $\nabla_{X}\xi\in\mathcal{S}$, since
$g(\nabla_{X}\xi,\zeta)=-g(\xi,\nabla_{X}\zeta)=0$. The null second fundamental
form of $L$ is defined by $B(X,Y)=-g(\nabla_{X}\xi,Y)$ for all $X,Y\in
\mathcal{S}$. It is said that it is totally geodesic if $B\equiv0$ and totally
umbilic if $B=\rho g$ for certain $\rho\in C^{\infty}(L)$. The trace of $B$ is
the null mean curvature of $L$, explicitly given by
\[
H_{p}=\sum_{i=3}^{n}B(e_{i},e_{i}),
\]
being $\{e_{3},\ldots,e_{n}\}$ an orthonormal basis of $\mathcal{S}_{p}$.

If $L$ is totally umbilic, then the null sectional curvature respect to $\xi$
of a null plane $\Pi=span(X,\xi)$, where $X\in\mathcal{S}$ is unitary,
can be expressed as
\begin{equation}
\mathcal{K}_{\xi}(\Pi)=\xi(\rho)-\rho^{2}. \label{Eq1}%
\end{equation}

If it is totally geodesic, we have $\mathcal{K}_{\xi}(\Pi)=0$ for any null
tangent plane $\Pi$ to $L$.

If $\theta$ is an open subset of $F$ and $h:\theta\rightarrow I$ is a
function, then the graph of $h$ is a null hypersurface of $I\times_{f}F$ if
and only if
\begin{equation}
\left\vert \nabla^{F}h\right\vert _{F}=f\circ h. \label{eqhyperluzGRW}%
\end{equation}

Locally, any null hypersurface $L$ can be expressed in this way. If we call
$\pi:I\times_{f}F\rightarrow F$ and $T:I\times_{f}F\rightarrow I$ the
canonical projections, then $\pi:L\rightarrow F$ is a local diffeomorphism and
thus, locally, $L$ coincides with the graph of the function given by
$h=T\circ\pi^{-1}:\theta\rightarrow I$, where $\theta\subset F$.
Moreover, given $v\in TF$ we have
\[
g_{F}(\nabla_{\nabla^{F}h}^{F}\nabla^{F}h,v)=\frac{1}{2}v\left(  (f\circ
h)^{2}\right)  =(f\circ h)(\,f^{\prime}\circ h)g_{F}(v,\nabla^{F}h),
\]
therefore
\begin{equation}
\nabla_{\nabla^{F}h}^{F}\nabla^{F}h=(f\circ h)(\,f^{\prime}\circ h)\nabla
^{F}h. \label{remarkh}%
\end{equation}

\section{Nullcones in generalized Robertson-Walker spaces}

\label{seccionconos}

If $M$ is a Lorentzian manifold and $\Theta$ a normal neighborhood of a point
$p\in M$, then we call $\widehat{\Theta}=exp_{p}^{-1}(\Theta)$ and
$\widehat{P}$ the position vector field in $T_{p}M$. The local position vector
field at $p$ is defined as $P_{\exp_{p}(v)}=(\exp_{p})_{\ast_{v}}(\widehat
{P}_{v})$ for all $v\in\widehat{\Theta}$ and the local future and past
nullcones at $p$ as
\[
C_{p}^{+}=exp_{p}\left(  \widehat{C}_{p}^{+}\cap\widehat{\Theta}\right)
,\ C_{p}^{-}=exp_{p}\left(  \widehat{C}_{p}^{-}\cap\widehat{\Theta}\right)  ,
\]
being $\widehat{C}_{p}^{+}$, $\widehat{C}_{p}^{-}$ the future and past
nullcone respectively in $T_{p}M$. In a GRW space, local nullcones can be
characterized as follows.

\begin{proposition}
\label{conoGRW}Let $I\times_{f}F$ be a GRW space and fix $p_{\ast}=(t_{\ast
},x_{\ast})\in I\times F$. If $\Theta$ is a normal neighborhood of $p_{\ast}$,
then the local nullcones at $p_{\ast}$ are given by
\begin{align*}
C_{p_{\ast}}^{+}  &  =\{(t,x)\in\Theta:\int_{t_{\ast}}^{t}\frac{1}%
{f(r)}dr=d_{F}(x_{\ast},x)\},\\
C_{p_{\ast}}^{-}  &  =\{(t,x)\in\Theta:\int_{t}^{t_{\ast}}\frac{1}%
{f(r)}dr=d_{F}(x_{\ast},x)\},
\end{align*}
being $d_{F}$ the Riemannian distance in $F$. Moreover, the local position
vector field at $p_{\ast}$ is given by
\[
P_{(t,x)}=\int_{t_{\ast}}^{t}\frac{f(r)}{f(t)}dr\,\partial_{t}+\frac
{\int_{t_{\ast}}^{t}\frac{f(r)}{f(t)}dr}{\int_{t_{\ast}}^{t}\frac{f(t)}%
{f(r)}dr}P_{x}^{F},
\]
for all $(t,x)\in C_{p_{\ast}}^{+}\cup C_{p_{\ast}}^{-}$, where $P^{F}$ is the
local position vector field at $x_{\ast}$ in $F$.
\end{proposition}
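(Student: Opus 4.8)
The plan is to lift geodesics from the fibre $F$ to the GRW space $I\times_f F$ and use the fact that a geodesic of $I\times_f F$ issuing from $p_*$ in a null direction projects (after a reparametrization) to a geodesic of $F$ issuing from $x_*$. Concretely, I would start with a unit-speed geodesic $\gamma$ of $F$ with $\gamma(0)=x_*$ and look for a curve $\alpha(s)=(t(s),\gamma(\phi(s)))$ in $I\times_f F$ that is a null geodesic with $\alpha(0)=p_*$. Writing the geodesic equations for the warped metric $g=-dt^2+f(t)^2 g_F$, the fibre component forces $\frac{d}{ds}\big(f(t(s))^2\,\phi'(s)\big)=0$, so $f(t)^2\phi'=c$ is constant, while the null condition gives $t'(s)^2=f(t(s))^2\phi'(s)^2=c^2/f(t)^2$, hence $t'=\pm c/f(t)$ and therefore $\int_{t_*}^{t}\frac{f(r)}{c}\,dr=\pm s$ (choosing the sign for the future cone). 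Since $\phi'=c/f(t)^2$ we get $\phi(s)=\int_0^s \frac{c}{f(t(u))^2}\,du=\int_{t_*}^t \frac{1}{f(r)}\,dr$ using the change of variable $u\mapsto t(u)$; this $\phi(s)$ is precisely the $F$-arclength travelled along $\gamma$. Because $\gamma$ is a unit geodesic realizing $d_F$ for small parameter, the point $x=\gamma(\phi(s))$ satisfies $d_F(x_*,x)=\phi(s)=\int_{t_*}^t\frac1f$, which is the claimed description of $C^+_{p_*}$ (and symmetrically for $C^-_{p_*}$, with the other sign).

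For the position vector field, I would use that $P_{\exp_{p_*}(v)}=(\exp_{p_*})_{*v}(\widehat P_v)$ equals $s\,\alpha'(s)$ when $\alpha(s)=\exp_{p_*}(sv)$, i.e. $P$ along the null geodesic is just $s$ times the velocity $\alpha'(s)=t'(s)\partial_t+\phi'(s)\gamma'(\phi(s))$. Now I would recognize the two coefficients in terms of the integrals: from $t'=c/f(t)$ and the relation $s = \frac1c\int_{t_*}^t f(r)\,dr$ one gets $s\,t'(s)=\frac{1}{f(t)}\int_{t_*}^t f(r)\,dr=\int_{t_*}^t\frac{f(r)}{f(t)}\,dr$, which is the $\partial_t$-coefficient. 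For the fibre part, $s\,\phi'(s)\gamma'(\phi(s)) = s\,\phi'(s)\gamma'(\phi(s))$, and since $P^F_x = \phi(s)\,\gamma'(\phi(s))$ is the local position vector field at $x_*$ in $F$ (same argument, in the Riemannian fibre), the fibre component of $P$ is $\frac{s\phi'(s)}{\phi(s)}P^F_x$. Finally I would simplify the scalar $\frac{s\phi'(s)}{\phi(s)}$: using $s=\frac1c\int_{t_*}^t f$, $\phi(s)=\int_{t_*}^t\frac1f$, and $\phi'(s)=c/f(t)^2$, one computes $s\phi'(s)=\frac{1}{f(t)^2}\int_{t_*}^t f(r)\,dr=\frac{1}{f(t)}\int_{t_*}^t\frac{f(r)}{f(t)}\,dr$, so the coefficient is $\frac{\int_{t_*}^t \frac{f(r)}{f(t)}\,dr}{\int_{t_*}^t\frac{f(t)}{f(r)}\,dr}$ after multiplying numerator and denominator through by $f(t)$ — matching the stated formula. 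The case $C^-_{p_*}$ is handled by the opposite sign and the symmetry $t\leftrightarrow t_*$ in the integrals.

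The main obstacle I anticipate is bookkeeping rather than conceptual: correctly keeping track of the two reparametrizations (the affine parameter $s$ of the null geodesic versus the arclength $\phi$ along the fibre geodesic), the signs distinguishing future from past cones, and making sure the identification $d_F(x_*,x)=\phi(s)$ is valid — i.e.\ that within the normal neighborhood $\Theta$ the fibre geodesic $\gamma$ is still minimizing so that the $F$-distance really equals the arclength parameter. This last point needs a brief argument that shrinking $\Theta$ (or noting $\Theta$ is normal) guarantees $x\mapsto\gamma(\phi)$ stays inside a normal neighborhood of $x_*$ in $F$. One should also check the degenerate-looking endpoint behaviour as $t\to t_*$ (where both integrals vanish); there the formula is understood by continuity, with the quotient tending to $1$, consistent with $P_{p_*}=0$.
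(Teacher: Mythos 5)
Your proposal is correct and follows essentially the same route as the paper: one integrates the null geodesic equations of the warped metric, sees that the fibre projection is a radial $F$-geodesic reparametrized by $\phi(s)=\int_{t_*}^{t}\frac{1}{f(r)}dr$ (giving the distance characterization of the cones, with the same caveat about the radial geodesic being minimizing), and then computes $P=s\gamma'(s)$ componentwise to get the stated formula. The only cosmetic difference is that the paper obtains the first integral $f(\alpha)\alpha'=\mathrm{const}$ from the closed conformal field $\zeta$, whereas you derive it from the fibre geodesic equation combined with the null condition.
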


\begin{proof}
Given $(t,x)\in C_{p_{\ast}}^{+}$, it exists a null geodesic $\gamma
:J\rightarrow C_{p_{\ast}}^{+}$ such that $\gamma(0)=p_{\ast}$, $g(\gamma
^{\prime}(0),\zeta_{p_{\ast}})=-1$ and $\gamma(s^{\ast})=(t,x)$ for certain
$s^{\ast}\in J$. Since $\zeta$ is closed and conformal, $g(\gamma
^{\prime},\zeta)$ is constant and therefore, if $\gamma(s)=(\alpha
(s),\beta(s))$, we have $\alpha^{\prime}(s)f(\alpha(s))=1$. Hence
$\alpha(s)=a^{-1}(s)$, being $a(s)=\int_{t_{\ast}}^{s}f(r)dr$. On the other
hand, $\beta$ is a pregeodesic in $F$ which holds
\[
\beta^{\prime\prime}(s)=-2\frac{d}{ds}\Big(\ln f(\alpha(s))\Big)\beta^{\prime
}(s),
\]
so $\beta$ is given by
\[
\beta(s)=exp_{x_{\ast}}^{F}\left(  b(s)u\right)  ,
\]
where $b(s)=\int_{0}^{s}\frac{1}{f(\alpha(r))^{2}}dr$ and $u\in T_{x_{\ast}}F$
with $g_{F}(u,u)=1$. Therefore,
\[
d_{F}(x_{\ast},x)=b(s^{\ast})=\int_{t_{\ast}}^{t}\frac{1}{f(r)}dr.
\]

Conversely, take $(t,x)\in\Theta$ such that $d_{F}(x_{\ast},x)=\int_{t_{\ast}%
}^{t}\frac{1}{f(r)}dr$. If we call
\[
a(s)=\int_{t_{\ast}}^{t}f(r)dr, \ b(s)=\int_{0}^{s}\frac{1}{f(a^{-1}(r))^{2}}dr,
\]
$s^{\ast}=a(t)$, $\alpha(s)=a^{-1}(s)$ and $\beta(s)=exp_{x_{\ast}}%
^{F}(b(s)u)$ where $u\in T_{x_{\ast}}F$ is such that $exp_{x_{\ast}}%
^{F}\left(  d_{F}(x_{\ast},x)u\right)  =x$, then it is easy to show that
$\gamma(s)=(\alpha(s),\beta(s))$ is a future null geodesic in $M$ with
$\gamma(0)=p_{\ast}$ and $\gamma(s^{\ast})=(t,x)$. Therefore, $(t,x)\in
C_{p_{\ast}}^{+}$.

To compute the local position vector field over $C_{p_{*}}^{+}$ we observe
that for any manifold and any geodesic $\gamma$ with $\gamma(0)=p_{\ast}$, the
local position vector field is given by $P_{\gamma(s)}=s\gamma^{\prime}(s)$.
In our particular case, $P_{(t,x)}=s^{\ast}\gamma^{\prime}(s^{\ast})$. Since
$\alpha(s^{\ast})=t$, we have
\begin{align*}
s^{\ast}  &  =\int_{t_{*}}^{t}f(r)dr,\\
\alpha^{\prime}(s^{\ast})  &  =\frac{1}{f(t)},\\
\beta^{\prime}(s^{\ast})  &  =b^{\prime}(s^{\ast})\left(  exp_{x_{*}}%
^{F}\right)  _{\ast_{b(s^{\ast})u}}\Big(u\Big)=\frac{b^{\prime}(s^{\ast}%
)}{b(s^{\ast})}P_{x}^{F},
\end{align*}
where $P^{F}$ is the local position vector field at $x_{*}$ in $F$. Since
$b(s^{\ast})=\int_{t_{*}}^{t}\frac{1}{f(r)}dr$ and $b^{\prime}(s^{\ast}%
)=\frac{1}{f(t)^{2}}$, we have
\[
P_{(t,x)}=\int_{t_{*}}^{t}\frac{f(r)}{f(t)}dr\,\partial_{t}+\frac{\int_{t_{*}%
}^{t}\frac{f(r)}{f(t)}dr}{\int_{t_{*}}^{t}\frac{f(t)}{f(r)}dr}P_{x}^{F}.
\]

\end{proof}

The following lemma will be helpful to compute the null second fundamental form of
a nullcone in a Robertson-Walker space.

\begin{lemma}
\label{lemaJacobi} Let $(F,g_{F})$ be a semi-Riemannian manifold, fix
$x_{\ast}\in F$ and take $\theta\subset F$ a normal neighborhood of $x_{\ast}%
$. Call $P^{F}\in\mathfrak{X}(\theta)$ the local position vector field at
$x_{\ast}$. If $w\in T_{x}F$, being $x=exp_{x_{\ast}}^{F}(v)\in\theta$, then
\[
g_{F}(\nabla_{w}^{F}P^{F},w)=\frac{1}{2}\frac{d}{ds}g(J,J)|_{s=1},
\]
where $J$ is the unique Jacobi vector field over $exp_{x_{\ast}}^{F}(sv)$ with
$J(0)=0$ and $J(1)=w$.
\end{lemma}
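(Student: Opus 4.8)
The plan is to derive both sides of the identity from a single geodesic variation of the radial geodesic $\gamma(s)=exp_{x_{\ast}}^{F}(sv)$. Since $\theta$ is a normal neighborhood of $x_{\ast}$, the map $(exp_{x_{\ast}}^{F})_{\ast_{v}}:T_{x_{\ast}}F\rightarrow T_{x}F$ is a linear isomorphism, so we may choose $\eta\in T_{x_{\ast}}F$ with $(exp_{x_{\ast}}^{F})_{\ast_{v}}(\eta)=w$. Define
\[
\alpha(s,u)=exp_{x_{\ast}}^{F}\big(s(v+u\eta)\big),
\]
which, after shrinking the $u$-interval, is well defined and takes values in $\theta$ for all $s\in[0,1]$ (a normal neighborhood is the diffeomorphic image of a star-shaped open set, and $[0,1]v$ is a compact subset of it). For each fixed $u$ the curve $s\mapsto\alpha(s,u)$ is a geodesic issuing from $x_{\ast}$, and $\alpha(s,0)=\gamma(s)$. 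Putting $V=\partial_{s}\alpha$ and $W=\partial_{u}\alpha$, the field $J:=W(\cdot,0)$ is the Jacobi field along $\gamma$ with $J(0)=0$ and $\frac{D}{ds}J(0)=\eta$; by the choice of $\eta$ it satisfies $J(1)=(exp_{x_{\ast}}^{F})_{\ast_{v}}(\eta)=w$, so it is exactly the Jacobi field of the statement (which is unique because $\eta\mapsto(exp_{x_{\ast}}^{F})_{\ast_{v}}(\eta)$ is a bijection).

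The main point is the identity $P^{F}_{\alpha(s,u)}=s\,V(s,u)$, valid for every $(s,u)$. This is precisely the fact already used in the proof of Proposition \ref{conoGRW}, namely that along a geodesic issuing from the base point the local position vector field equals $s$ times the velocity; here it is applied to each radial geodesic $s\mapsto\alpha(s,u)$, and it follows from $\widehat{P}_{sv}=sv$ together with the definition $P^{F}_{exp_{x_{\ast}}^{F}(sv)}=(exp_{x_{\ast}}^{F})_{\ast_{sv}}(\widehat{P}_{sv})$.

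Now differentiate this identity covariantly in $u$ and evaluate at $(s,u)=(1,0)$. Since $x=\alpha(1,0)$ and $u\mapsto\alpha(1,u)$ is a curve through $x$ with velocity $W(1,0)=w$, the chain rule gives
\[
\nabla_{w}^{F}P^{F}=\frac{D}{du}\big(P^{F}_{\alpha(1,u)}\big)\Big|_{u=0}=\frac{D}{du}\big(sV\big)\Big|_{(1,0)}=\frac{D}{du}V\Big|_{(1,0)}.
\]
Because the Levi-Civita connection is torsion free, $\frac{D}{du}\partial_{s}\alpha=\frac{D}{ds}\partial_{u}\alpha$, so $\frac{D}{du}V|_{u=0}=\frac{D}{ds}W|_{u=0}=\frac{D}{ds}J$, and hence $\nabla_{w}^{F}P^{F}=\frac{D}{ds}J(1)$. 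Taking the inner product with $w=J(1)$,
\[
g_{F}(\nabla_{w}^{F}P^{F},w)=g_{F}\Big(\tfrac{D}{ds}J(1),J(1)\Big)=\tfrac{1}{2}\,\tfrac{d}{ds}g_{F}(J,J)\big|_{s=1}.
\]

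The only delicate points are of a bookkeeping nature: ensuring $\alpha$ stays in $\theta$ for small $u$, keeping the canonical identifications $T_{sv}(T_{x_{\ast}}F)\cong T_{x_{\ast}}F$ straight so that $W(1,0)=w$, and justifying the interchange $\frac{D}{du}\partial_{s}\alpha=\frac{D}{ds}\partial_{u}\alpha$ for the two-parameter map $\alpha$ — all standard. I do not expect a genuine obstacle; the lemma is essentially the translation of ``radial position field $=$ $s\times$ radial velocity'' into the language of Jacobi fields via one variation.
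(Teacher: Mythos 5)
Your argument is correct: the identity $P^{F}_{\alpha(s,u)}=s\,\partial_{s}\alpha(s,u)$ along each radial geodesic of the variation, together with the torsion-free symmetry $\frac{D}{du}\partial_{s}\alpha=\frac{D}{ds}\partial_{u}\alpha$, yields $\nabla^{F}_{w}P^{F}=\frac{D}{ds}J(1)$, and pairing with $w=J(1)$ gives the stated formula. The paper states Lemma \ref{lemaJacobi} without proof, so there is nothing to compare against; your variation-of-radial-geodesics argument is the standard one the authors evidently intend (they already use the fact $P_{\gamma(s)}=s\gamma^{\prime}(s)$ in the proof of Proposition \ref{conoGRW}), and the bookkeeping issues you flag (keeping $s(v+u\eta)$ in the star-shaped set $\widehat{\Theta}$, checking $\partial_{u}\alpha(1,0)=w$, and the symmetry of mixed covariant derivatives of a two-parameter map) are all handled correctly, including the uniqueness of $J$ via the invertibility of $(\exp^{F}_{x_{\ast}})_{\ast_{v}}$ inside the normal neighborhood.
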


\begin{proposition}
\label{conosumbilicos}Local nullcones in a Robertson-Walker space are totally umbilic.
\end{proposition}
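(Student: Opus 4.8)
The plan is to compute the null second fundamental form $B$ of a nullcone $C_{p_*}^+$ directly using the formula $B(X,Y)=-g(\nabla_X\xi,Y)$, where $\xi$ is the distinguished null vector field normalized by $g(\zeta,\xi)=1$, and to show that the resulting bilinear form is proportional to the induced metric on the screen distribution $\mathcal{S}$. First I would identify $\xi$ in terms of the local position vector field $P$ computed in Proposition \ref{conoGRW}: since null geodesics generate the cone and $P_{\gamma(s)}=s\gamma'(s)$, the null generator direction is $P$ itself, so $\xi$ is a pointwise rescaling $\xi = \varphi P$ for a suitable positive function $\varphi$ on the cone, determined by the normalization $g(\zeta,\xi)=1$; using $\zeta=f\partial_t$ and the $\partial_t$-component of $P$ from Proposition \ref{conoGRW} gives $\varphi$ explicitly as (essentially) the reciprocal of $\int_{t_*}^t f(r)\,dr$.

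Next, for $X,Y\in\mathcal{S}$ I would expand $g(\nabla_X\xi,Y)=X(\varphi)g(P,Y)+\varphi\,g(\nabla_X P,Y)$. The first term vanishes because $P$ restricted to the cone is proportional to $\xi$, which is $g$-orthogonal to $\mathcal{S}$ (indeed $g(\xi,Y)=0$ for $Y\in\mathcal{S}\subset TL$). So everything reduces to computing $g(\nabla_X P,Y)$ for $X,Y$ in the screen distribution. Here I would decompose $P = P^t\partial_t + (\text{something})\,P^F$ along the cone as in Proposition \ref{conoGRW}, where $P^F$ is the local position field of the fibre $F$ at $x_*$, and use the GRW connection formulas ($\nabla_{\partial_t}\partial_t=0$, $\nabla_{\partial_t}V = \nabla_V\partial_t = (f'/f)V$ for $V$ tangent to $F$, and $\nabla_V W = \nabla^F_V W - \frac{f'}{f}g(V,W)\partial_t/\dots$ with the appropriate warping terms) to reduce $g(\nabla_X P,Y)$ to a fibre computation involving $g_F(\nabla^F_{\bar X}P^F,\bar Y)$, where $\bar X,\bar Y$ are the $F$-components of $X,Y$. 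At this point I would invoke Lemma \ref{lemaJacobi}: the symmetric part $g_F(\nabla^F_w P^F,w)$ is expressed through the Jacobi field along the radial geodesic in $F$, and because $F$ has constant curvature (this is the RW hypothesis, used here crucially and not before), the Jacobi fields along geodesics are explicit and isotropic — $g_F(J,J)$ depends only on $|J(1)|^2$ and the arclength, not on the direction of $w$ within $w^\perp$ to the radial direction — so $g_F(\nabla^F_w P^F,w)=\lambda(x)\,g_F(w,w)$ for a scalar $\lambda$ depending only on the $F$-distance to $x_*$. Combined with the warping terms, which are themselves pointwise multiples of $g$ on $\mathcal{S}$ (they involve $g(V,W)$ or $g_F(V,W)$), this forces $B(X,Y)=\rho\,g(X,Y)$.

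I would also need to check that the directions in $\mathcal{S}$ really are the relevant "transverse" directions: namely that vectors in the screen distribution project to vectors in $F$ that are $g_F$-orthogonal to the radial direction $P^F$ (equivalently to $\nabla^F h$), which follows since $\mathcal{S}=\zeta^\perp\cap TL$ and $\zeta$ pairs with $P$ nontrivially — so $\mathcal{S}$ is exactly the orthogonal complement of the generator inside $TL$, matching the non-radial Jacobi directions. The main obstacle is the bookkeeping in the second step: correctly handling the warped-product Christoffel symbols and the $t$-dependence of the coefficient multiplying $P^F$ in $P$, and verifying that all the extra terms (coming from $X(\varphi)$-type contributions, from $\nabla_{\partial_t}$ mixing, and from the warping) are either zero on $\mathcal{S}$ or proportional to $g|_{\mathcal{S}}$. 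The conceptual heart, though, is isotropy of Jacobi fields in constant curvature via Lemma \ref{lemaJacobi}: that is precisely what upgrades "totally umbilic in the fibre directions after the warp" from a generic GRW statement to the RW conclusion, and it is why the proposition is stated for Robertson-Walker rather than generalized Robertson-Walker spaces.
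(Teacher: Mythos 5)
Your plan is correct and is essentially the paper's own proof: you write $\xi$ as the appropriate rescaling of the position field $P$ from Proposition \ref{conoGRW}, reduce $B(w,w)$ for $w\in\mathcal{S}$ to the fibre term $g_F(\nabla^F_w P^F,w)$ via the warped-product connection, and use Lemma \ref{lemaJacobi} together with the explicit (isotropic) Jacobi fields of the constant-curvature fibre to get $B=\rho\,g$. The paper simply carries out the bookkeeping you describe, ending with the explicit umbilic factor in the three cases $k>0$, $k=0$, $k<0$.
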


\begin{proof}
Fix $p_{\ast}=(t_{\ast},x_{\ast})\in I\times_{f} F$ and consider $p=(t,x)\in
C_{p_{\ast}}^{+}$. From Proposition \ref{conoGRW}, $\xi_{(t,x)}=\frac{-1}%
{\int_{t_{\ast}}^{t}f(r)dr}P_{(t,x)}$. If $w\in\mathcal{S}_{(t,x)}$, then
$g(w,\partial_{t})=0$ and so $w\in T_{x}F$ with $g(P_{x}^{F},w)=0$.
Therefore,
\[
\nabla_{w}\xi=\frac{-1}{\int_{t_{\ast}}^{t}f(r)dr}\nabla_{w}P=-\frac
{f^{\prime}(t)}{f(t)^{2}}w-\frac{1}{f(t)^{2}\int_{t_{\ast}}^{t}\frac{1}%
{f(r)}dr}\nabla_{w}P^{F}.
\]

Now, we use Lemma \ref{lemaJacobi} and that $F$ has constant curvature $k$ to
compute $g_{F}(\nabla_{w}^{F}P^{F},w)$. Take $u\in T_{x_{*}}F$ such that
$exp^{F}_{x_{*}}(d_{F}(x,x_{*})u)=x$. The Jacobi vector field over
$exp_{x_{\ast}}^{F}(s\,d_{F}(x,x_{*})u)$, $0\leq s\leq1$, with $J(0)=0$ and
$J(1)=w$ is given by $J(s)=\varphi(s)W(s)$, where $W$ is parallel with $W(1)=w
$ and
\[
\varphi(s)=\left\{
\begin{array}
[c]{lcc}%
\frac{Sin\left(  \sqrt{k}d_{F}(x,x_{*}) s\right)  }{Sin\left(  \sqrt{k}%
d_{F}(x,x_{*})\right)  } & if & k>0,\\
s & if & k=0,\\
\frac{Sinh\left(  \sqrt{-k} d_{F}(x,x_{*}) s\right)  }{Sinh\left(  \sqrt
{-k}d_{F}(x,x_{*}) \right)  } & if & k<0.
\end{array}
\right.
\]

Therefore, since $d_{F}(x,x_{\ast})=\int_{t_{\ast}}^{t} \frac{1}{f(r)}dr$, we
have
\[
B(w,w)=\left\{
\begin{array}
[c]{lcc}%
\frac{1}{f(t)^{2}}\left(  f^{\prime}(t)+\frac{\sqrt{k}}{Tan\left(  \sqrt
{k}\int_{t_{\ast}}^{t}\frac{1}{f(r)}dr\right)  }\right)  g(w,w) & if & k>0,\\
\frac{1}{f(t)^{2}}\left(  f^{\prime}(t)+\frac{1}{\int_{t_{\ast}}^{t}\frac
{1}{f(r)}dr}\right)  g(w,w) & if & k=0,\\
\frac{1}{f(t)^{2}}\left(  f^{\prime}(t)+\frac{\sqrt{-k}}{Tanh\left(  \sqrt
{-k}\int_{t_{\ast}}^{t}\frac{1}{f(r)}dr\right)  }\right)  g(w,w) & if & k<0.
\end{array}
\right.
\]

\end{proof}

\begin{remark}
\label{conogeodesico} In view of the null second fundamental form of nullcones in
Robertson-Walker spaces, it follows that they can not be totally geodesic,
since the null mean curvature tends to infinity as the coordinate $t$
approaches to $t_{\ast}$. This is a general fact in any Lorentzian manifold (see for example Proposition 2.1 in \cite{QWang}).
\end{remark}

\begin{example}
Since $\mathbb{R}_{1}^{n}$, $\mathbb{S}_{1}^{n}$ and a suitable portion of
$\mathbb{H}_{1}^{n}$ can be expressed as a RW space (see the first column of
Table \ref{tabla1}), the above proposition shows the well-known fact that
nullcones of a Lorentzian manifolds of constant curvature are totally umbilic.
\end{example}

\begin{proposition}
\label{Jacobi}Let $\gamma$ be a null geodesic in a GRW space. If $\gamma$ is
contained in a totally umbilic nullcone $L$, and $J$ is a Jacobi vector field
with $J\in\mathcal{S}$, then it holds
\[
J^{\prime\prime}+\frac{Ric(\gamma^{\prime},\gamma^{\prime})}{n-2}J=0.
\]

In particular, if there exists a conjugate point of $\gamma(0)$ along $\gamma
$, it has maximum multiplicity.
\end{proposition}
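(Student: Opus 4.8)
The plan is to convert the asserted equation into a curvature identity and then exploit that, for a totally umbilic null hypersurface, the relevant curvature operator is scalar on $\mathcal{S}$. Since the Jacobi equation along $\gamma$ is $J''+R(J,\gamma')\gamma'=0$, it suffices to prove $R(J,\gamma')\gamma'=\frac{Ric(\gamma',\gamma')}{n-2}J$ for $J\in\mathcal{S}$. First I would note that $\gamma'$ is a constant multiple of $\xi$ along $\gamma$: because $L$ is null, the only null directions of $T_{q}L$ lie in $\mathbb{R}\xi_{q}$, so $\gamma'=\lambda\xi$ along $\gamma$, and $0=\nabla_{\gamma'}\gamma'=\lambda'\xi+\lambda^{2}\nabla_{\xi}\xi=\lambda'\xi$ forces $\lambda$ constant since $\xi$ is geodesic; rescaling the affine parameter we may therefore assume $\gamma'=\xi$ along $\gamma$ (Jacobi fields, conjugate points and their multiplicities are unaffected).

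The core step is a direct computation of $R(X,\xi)\xi$ for a vector field $X\in\mathcal{S}$. Total umbilicity together with $\nabla_{X}\xi\in\mathcal{S}$ (already noted in the Preliminaries) gives $\nabla_{Z}\xi=-\rho Z$ for every $Z\in\mathcal{S}$; also $\nabla_{\xi}X\in\mathcal{S}$, since $g(\nabla_{\xi}X,\xi)=-g(X,\nabla_{\xi}\xi)=0$ and, using that $\zeta$ is closed and conformal, $g(\nabla_{\xi}X,\zeta)=-g(X,\nabla_{\xi}\zeta)=0$; hence $[X,\xi]=-\rho X-\nabla_{\xi}X\in\mathcal{S}$ too. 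Plugging these into $R(X,\xi)\xi=\nabla_{X}\nabla_{\xi}\xi-\nabla_{\xi}\nabla_{X}\xi-\nabla_{[X,\xi]}\xi$ and using $\nabla_{\xi}\xi=0$ and $\nabla_{Z}\xi=-\rho Z$ on $\mathcal{S}$, the $\nabla_{\xi}X$ terms cancel and one obtains $R(X,\xi)\xi=(\xi(\rho)-\rho^{2})X$; in particular $R(X,\xi)\xi\in\mathcal{S}$, and this recovers (\ref{Eq1}). By tensoriality this identity holds for $J\in\mathcal{S}$ along $\gamma$. To identify the scalar I would trace $v\mapsto R(v,\xi)\xi$ in a basis $\{\eta,\xi,E_{3},\dots,E_{n}\}$ with $g(\xi,\eta)=1$, $g(\eta,\eta)=0$, $\eta\perp\mathcal{S}$ and $\{E_{i}\}$ orthonormal in $\mathcal{S}$: the $\xi$-slot contributes $0$ as $R(\xi,\xi)\xi=0$, the $\eta$-slot contributes $g(R(\eta,\xi)\xi,\xi)=0$, and each $E_{i}$ contributes $\xi(\rho)-\rho^{2}$, so $Ric(\xi,\xi)=(n-2)(\xi(\rho)-\rho^{2})$. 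Combining the two facts gives $R(J,\xi)\xi=\frac{Ric(\xi,\xi)}{n-2}J$, i.e.\ the stated equation.

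For the multiplicity claim, the same identities show that $\mathcal{S}$ is parallel along $\gamma$ (if $X$ is a section of $\mathcal{S}$ then $\nabla_{\gamma'}X\in\mathcal{S}$), that $\gamma'^{\perp}=\mathcal{S}\oplus\mathbb{R}\gamma'$ along $\gamma$, and that the projection onto the quotient bundle $\gamma'^{\perp}/\mathbb{R}\gamma'$ restricts to a bundle isomorphism on $\mathcal{S}$ intertwining covariant derivatives; since this bundle has rank $n-2$, a conjugate point of $\gamma(0)$ along $\gamma$ has multiplicity at most $n-2$ (a Jacobi field vanishing at $\gamma(0)$ and $\gamma(s_{0})$ lies in $\gamma'^{\perp}$ and descends injectively to the quotient, its kernel being spanned by $s\gamma'(s)$). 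Fixing a parallel frame $E_{3},\dots,E_{n}$ of $\mathcal{S}$ along $\gamma$, the equation just proved becomes the decoupled system $u_{i}''+c\,u_{i}=0$ with the \emph{same} scalar $c=\frac{Ric(\gamma',\gamma')}{n-2}$, so every Jacobi field in $\mathcal{S}$ vanishing at $\gamma(0)$ has the form $\phi(s)W(s)$ with $W$ parallel in $\mathcal{S}$ and $\phi$ the fixed solution of $\phi''+c\phi=0$, $\phi(0)=0$. If $\gamma(s_{0})$ is conjugate to $\gamma(0)$, decomposing an associated nonzero Jacobi field along $\mathcal{S}\oplus\mathbb{R}\gamma'$ and using $R(\gamma',\gamma')\gamma'=0$ shows its $\mathcal{S}$-component is itself a Jacobi field in $\mathcal{S}$ vanishing at both endpoints, forcing $\phi(s_{0})=0$; then the $n-2$ independent fields $\phi(s)E_{i}(s)$ all vanish at $\gamma(s_{0})$, so the multiplicity equals $n-2$.

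I expect the main obstacle to be the vector identity $R(X,\xi)\xi=(\xi(\rho)-\rho^{2})X$ — specifically, verifying that $R(X,\xi)\xi$ has no component along $\xi$, equivalently that the $\nabla_{\xi}X$ contributions cancel — because it is precisely this that upgrades the scalar information in (\ref{Eq1}) to the full vector Jacobi equation; the secondary difficulty is the bookkeeping in the multiplicity argument, namely identifying $\mathcal{S}$ with the quotient bundle and pushing the ODE down to it.
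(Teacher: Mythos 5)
Your proposal is correct and follows essentially the same route as the paper: after reparametrizing so that $\gamma$ is an integral curve of $\xi$, you use umbilicity ($\nabla_{X}\xi=-\rho X$ on $\mathcal{S}$) to obtain $R(J,\gamma')\gamma'=(\xi(\rho)-\rho^{2})J$, identify this scalar with $Ric(\gamma',\gamma')/(n-2)$ via Equation (\ref{Eq1}) and a trace over the screen, and conclude maximal multiplicity from the Jacobi operator being proportional to the identity on $\mathcal{S}$. The only difference is that you make explicit the trace computation and the quotient-bundle bookkeeping for the multiplicity, which the paper leaves implicit.
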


\begin{proof}
Using that $L$ is totally umbilic, we have $\nabla_{X}\xi=-\rho X$ for all
$X\in\mathcal{S}$. Since $\xi$ is geodesic, after a suitable affine
reparametrization, $\gamma$ is an integral curve of $\xi$, thus
\[
R_{J\gamma^{\prime}}\gamma^{\prime}=R_{J\xi}\xi=\left(  \xi(\rho)-\rho
^{2}\right)  J.
\]

Using Equation (\ref{Eq1})$,$ we get the result. Finally observe that the
Jacobi operator is proportional to the identity, so if there exists a
conjugate point, then it has maximum multiplicity.
\end{proof}

This proposition is potentially interesting in Cosmology because the
multiplicity in gravitational lens phenomena can be detected by astronomical
observations. On the other hand, after Proposition \ref{conosumbilicos},
conjugate points in Robertson-Walker spaces have maximum multiplicity, see
also \cite{FloSan00}.

We finish this section with a criterion to determine whenever a null
hypersurface given by the graph of a function is contained in a nullcone.

\begin{lemma}
\label{lema1}Let $I\times_{f}F$ be a GRW space and $p_{\ast}=(t_{\ast}%
,x_{\ast})$ a fixed point. Let $\theta\subset F$ with $x_{\ast}$ in the
clausure of $\theta$ and $h:\theta\rightarrow I$ a function. If the
graph of $h$ is a null hypersurface, then it is contained in the local
nullcone at $p_{\ast}$ if and only if $\lim_{x\rightarrow x_{\ast}%
}h(x)=t_{\ast}$ and $\nabla^{F}h$ is proportional to $P^{F}$, the local
position vector field at $x_{\ast}$ in $F$.
\end{lemma}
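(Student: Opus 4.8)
The plan is to characterize when the graph of $h$ lies in the nullcone $C_{p_\ast}^{+}$ (the past case being symmetric) by using the explicit description of $C_{p_\ast}^{+}$ from Proposition \ref{conoGRW}, namely that $(t,x)\in C_{p_\ast}^{+}$ iff $\int_{t_\ast}^{t}\frac{1}{f(r)}dr = d_F(x_\ast,x)$. The first direction is the easy one: if the graph of $h$ is contained in the local nullcone, then for every $x\in\theta$ we have $\int_{t_\ast}^{h(x)}\frac{1}{f(r)}dr = d_F(x_\ast,x)$. Letting $x\to x_\ast$ forces $d_F(x_\ast,x)\to 0$, hence $\int_{t_\ast}^{h(x)}\frac{1}{f}\to 0$, which gives $\lim_{x\to x_\ast}h(x)=t_\ast$ since $1/f$ is positive and continuous. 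For the proportionality of $\nabla^F h$ and $P^F$, I would differentiate the identity $\int_{t_\ast}^{h(x)}\frac{1}{f(r)}dr = d_F(x_\ast,x)$. Using that on a normal neighborhood the function $x\mapsto d_F(x_\ast,x)$ is smooth (away from $x_\ast$) with $\nabla^F d_F(x_\ast,\cdot) = \frac{1}{d_F(x_\ast,x)}P^F_x$ (the unit radial field), the left side gives $\frac{1}{f(h(x))}\nabla^F h$, so $\nabla^F h = \frac{f(h(x))}{d_F(x_\ast,x)}P^F_x$, establishing proportionality.

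For the converse, suppose $\lim_{x\to x_\ast}h(x)=t_\ast$ and $\nabla^F h = \lambda P^F$ for some function $\lambda$ on $\theta$ (necessarily smooth where $P^F\neq 0$). I want to show $\int_{t_\ast}^{h(x)}\frac{1}{f(r)}dr = d_F(x_\ast,x)$ for all $x\in\theta$. The strategy is to consider, for a fixed $x\in\theta$, the radial geodesic $\sigma(s)=\exp^F_{x_\ast}(sv)$ from $x_\ast$ to $x$, where $\exp^F_{x_\ast}(v)=x$, and to study the function $\varphi(s) = \int_{t_\ast}^{h(\sigma(s))}\frac{1}{f(r)}dr - s|v|$ on $(0,1]$. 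Differentiating, $\varphi'(s) = \frac{1}{f(h(\sigma(s)))}\,g_F(\nabla^F h,\sigma'(s)) - |v|$. Since $\sigma'(s)$ is, up to the constant factor $|v|$, the value of the radial unit field, and $P^F_{\sigma(s)} = s\,\sigma'(s)$ (the local position vector field along a geodesic is $s\gamma'(s)$, as noted in the proof of Proposition \ref{conoGRW}), the hypothesis $\nabla^F h = \lambda P^F$ gives $g_F(\nabla^F h,\sigma'(s)) = \lambda(\sigma(s))\,s\,|v|^2 / |v| \cdot$ (handled carefully) $= \lambda(\sigma(s))\, s\, |v|^2$. Meanwhile the null condition \eqref{eqhyperluzGRW}, $|\nabla^F h|_F = f\circ h$, combined with $\nabla^F h = \lambda P^F$ and $|P^F_{\sigma(s)}| = s|v|$, pins down $|\lambda(\sigma(s))|\, s\,|v| = f(h(\sigma(s)))$. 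Substituting back into $\varphi'(s)$ yields $\varphi'(s) = \pm|v| - |v|$, which is identically $0$ once one checks the sign is $+$ (this is where the future, as opposed to past, nullcone is selected — I would argue the sign is locally constant and fix it near $s\to 0^{+}$). Then $\varphi$ is constant; and the hypothesis $\lim_{x\to x_\ast}h(x)=t_\ast$ forces $\varphi(s)\to 0$ as $s\to 0^{+}$, so $\varphi\equiv 0$, i.e. $\int_{t_\ast}^{h(x)}\frac{1}{f}=|v|=d_F(x_\ast,x)$, which is exactly the condition for $(h(x),x)\in C_{p_\ast}^{+}$.

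The main obstacle I expect is the bookkeeping around the sign and the behavior at the vertex $x_\ast$: since $x_\ast$ only lies in the closure of $\theta$ and $P^F$ (hence $\nabla^F h$) vanishes or is singular there, one must be careful that $\lambda$ is well-defined and that the limit arguments are legitimate. Concretely, I would work on $\theta\setminus\{x_\ast\}$ (or note $x_\ast\notin\theta$), where radial coordinates are smooth, and only invoke the $\lim_{x\to x_\ast}$ hypotheses to anchor the constant of integration. A secondary technical point is justifying $\nabla^F d_F(x_\ast,\cdot) = P^F/d_F(x_\ast,\cdot)$ and $|P^F_{\exp^F_{x_\ast}(v)}| = |v|$ on the normal neighborhood — both are standard facts about the position vector field and the Gauss lemma, and I would cite them rather than reprove them. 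Everything else is a direct computation chaining \eqref{eqhyperluzGRW}, Proposition \ref{conoGRW}, and the identity $P_{\gamma(s)}=s\gamma'(s)$.
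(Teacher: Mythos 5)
Your argument is correct in substance but follows a genuinely different route from the paper's. The paper never writes down the distance identity: it takes an ambient null geodesic $\gamma(s)=(\alpha(s),\beta(s))$ issuing from $p_{\ast}$, uses $g_F(\nabla^F h,\beta')=(h\circ\beta)'$ together with the null conditions on $h$ and on $\gamma$, and characterizes containment in the cone through the equality case of the Cauchy--Schwarz inequality, finishing with the observation that $\beta'$ is proportional to $P^F$. You instead use the statement of Proposition \ref{conoGRW} (the cone as the level set $\int_{t_{\ast}}^{t}\frac{1}{f}=d_F(x_{\ast},x)$) and reduce both implications to the scalar identity $u:=\int_{t_{\ast}}^{h}\frac{1}{f}=\pm d_F(x_{\ast},\cdot)$, obtained by differentiating it in one direction and integrating $\varphi'$ along radial geodesics in the other. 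Your forward direction is arguably cleaner and more explicit; the paper's argument buys independence from the distance function by working directly with the ruling null geodesics of the cone.

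One step of your converse needs patching: you integrate along the radial segment $\sigma(s)=\exp^F_{x_{\ast}}(sv)$, $s\in(0,1]$, but the hypotheses only put $x_{\ast}$ in the closure of $\theta$, so this segment need not stay inside $\theta$ (where $h$ is defined), nor need $\sigma(s)$ lie in $\theta$ for small $s$, which you also require to anchor $\varphi(0^{+})=0$. The cheap repair avoids radial segments altogether: since $\nabla^F u=\frac{1}{f\circ h}\nabla^F h$ is unit by (\ref{eqhyperluzGRW}) and proportional to $P^F$, you get $\nabla^F u=\epsilon\,\nabla^F d_F(x_{\ast},\cdot)$ with $\epsilon=\pm1$ continuous, hence $u-\epsilon\,d_F(x_{\ast},\cdot)$ is constant on each connected component of $\theta$, and the limit hypothesis forces that constant to vanish on the components accumulating at $x_{\ast}$ (some connectedness of $\theta$ is implicitly assumed in the lemma; the paper's own proof is equally silent on this). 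This also settles your sign question: $\epsilon$ is constant on a connected $\theta$, with $\epsilon=+1$ giving the future cone and $\epsilon=-1$ the past one.
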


\begin{proof}
Take a null geodesic $\gamma$ such that $\gamma(0)=p_{\ast}$ and
$\gamma(s)=\left(  \alpha(s),\beta(s)\right)  $. We have
\begin{align*}
g_{F}\left(  \nabla^{F}h,\beta^{\prime}(s)\right)   &  =(h\circ\beta)^{\prime
}(s),\\
\left\vert \nabla^{F}h\right\vert _{F}\left\vert \beta^{\prime}(s)\right\vert
_{F}  &  =\left\vert \alpha^{\prime}(s)\right\vert .
\end{align*}

Using the Cauchy-Schwarz inequality and that locally $C_{p_{\ast}}^{\pm}%
=\exp_{p_{\ast}}(\widehat{C}_{p_{\ast}}^{\pm})$, the graph of $h$ is contained
in the nullcone at $p_{\ast}$ if and only if $\nabla^{F}h$ and $\beta^{\prime
}$ are proportional and $\lim_{x\rightarrow x_{\ast}}h(x)=t_{\ast}$, but
observe that $\beta^{\prime}$ is proportional to $P^{F}$.
\end{proof}

\begin{remark}
If $L$ is a null hypersurface in a GRW space given by the graph of a function
$h$ and $p_{\ast}\in L$, then for any null geodesic $\gamma(s)=\left(
\alpha(s),\beta(s)\right)  $ with $\gamma(0)=p_{\ast}$ we have $h(\beta
(s))\leq\alpha(s)$ and the equality holds if and only if $\gamma$ belongs to
$L$. This implies that, near $p_{\ast}$, it holds $L\subset I^{+}(p_{\ast
})^{c}$. Geometrically, this means that the local nullcone at $p_{\ast}$ is an
extremal null hypersurface near $p_{\ast}$. This result is also true for any
arbitrary Lorentzian manifold. Indeed, given a point $p_{\ast}\in L$ consider
$\Theta$ a normal neighborhood of $p_{\ast}$ and suppose there exists a point
$q\in L\cap I^{+}(p_{\ast},\Theta)$. Take a timelike plane $\Pi\subset
T_{p_{\ast}}M$ with $0,\widehat{q}\in\Pi$, where $\widehat{q}=\exp_{p_{\ast}%
}^{-1}(q)$. The intersection $L\cap\exp_{p_{\ast}}(\Pi)$ is the trace of a
curve from $p_{\ast}$ to $q$ which is null or spacelike in each of its points
and is contained in the timelike surface $\exp_{p_{\ast}}(\Pi)$, but $q\in
I^{+}(p_{\ast},\exp_{p_{\ast}}(\Pi))$, which is a contradiction. Using a past
nullcone sharing a null geodesic of $L$, with vertex near $p_{\ast}$, we can
figure out the situation as $L$ being a sheet between two millstones.
\end{remark}

\section{Umbilic null hypersurfaces}

\label{seccionumbilica}

In this section we prove the main result of this paper. It gives us a
correspondence between totally umbilic null hypersurfaces and twisted
decompositions of the fibre of a GRW space. Thus, it shows that only special
types of GRW spaces can admit totally umbilic null hypersurfaces and it also
provides a method to construct them. First, we need the following lemma.

\begin{lemma}
\label{lemaB} Let $I\times_{f}F$ be a GRW space and $L$ a null hypersurface
given by the graph of a function $h$. Then
\begin{equation}
\xi=-\frac{1}{f\circ h}\partial_{t}-\frac{1}{\left(  f\circ h\right)  ^{3}%
}\nabla^{F}h, \label{campoxi}%
\end{equation}
$\mathcal{S}\simeq \{X\in TF:g_{F}(X,\nabla^{F}h)=0\}$ and the null second fundamental form is
given by
\[
B(X,Y)=\frac{f^{\prime}\circ h}{\left(  f\circ h\right)  ^{2}}g(X,Y)+\frac
{1}{f\circ h}Hess_{h}^{F}(X,Y),
\]
for all $X,Y\in\mathcal{S}$.
\end{lemma}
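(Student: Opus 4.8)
The lemma asks to compute three things for a null hypersurface $L$ given as the graph of $h:\theta\to I$: an explicit formula for the distinguished null vector $\xi$, an identification of the screen distribution $\mathcal{S}$, and a formula for the null second fundamental form $B$ in terms of the Hessian of $h$ in $F$.

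First I would compute $\xi$. The hypersurface is $L=\{(h(x),x):x\in\theta\}$, so a tangent vector at $(h(x),x)$ has the form $(dh(v),v)=dh(v)\,\partial_t+v$ for $v\in T_xF$. I would look for a null vector $\xi=a\,\partial_t+V$ (with $V\in T_xF$) that is tangent to $L$ and satisfies $g(\zeta,\xi)=1$. Tangency means $V=dh(V)$-component matches, i.e. $\xi=a\,\partial_t+a\,(\text{something})$; more precisely $\xi$ tangent to $L$ forces the spatial part to determine the $\partial_t$ part via $dh$. The condition $g(\zeta,\xi)=g(f\partial_t,a\partial_t+V)=-af=1$ gives $a=-1/(f\circ h)$. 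Nullity $-a^2+(f\circ h)^2g_F(V,V)=0$ then fixes $|V|_F$. Using the defining relation $|\nabla^F h|_F=f\circ h$ from Equation (\ref{eqhyperluzGRW}) and the tangency constraint, one checks $V=-\frac{1}{(f\circ h)^3}\nabla^F h$ works: indeed tangency requires $dh(V)=a$, i.e. $g_F(\nabla^F h,V)=-1/(f\circ h)$, and $g_F(\nabla^F h, -\frac{1}{(f\circ h)^3}\nabla^F h)=-\frac{|\nabla^F h|_F^2}{(f\circ h)^3}=-\frac{1}{f\circ h}$. This yields (\ref{campoxi}). The screen distribution $\mathcal{S}=\zeta^\perp\cap TL$ consists of tangent vectors orthogonal to $\partial_t$, hence with zero $\partial_t$-component; such a vector $(0,X)$ is tangent to $L$ iff $dh(X)=0$, i.e. $g_F(\nabla^F h,X)=0$, giving the stated identification $\mathcal{S}\simeq\{X\in TF:g_F(X,\nabla^F h)=0\}$.

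Next, the main computation: $B(X,Y)=-g(\nabla_X\xi,Y)$ for $X,Y\in\mathcal{S}$. I would expand $\nabla_X\xi$ using (\ref{campoxi}) and the Koszul/warped-product connection formulas for $g=-dt^2+f(t)^2 g_F$. Writing $\xi=-\frac{1}{f\circ h}\partial_t-\frac{1}{(f\circ h)^3}\nabla^F h$, differentiate each piece. The derivative of the scalar coefficients contributes terms involving $X(f\circ h)=(f'\circ h)dh(X)=0$ since $X\in\mathcal{S}$ — this is the key simplification that kills several terms. So only $-\frac{1}{f\circ h}\nabla_X\partial_t$ and $-\frac{1}{(f\circ h)^3}\nabla_X(\nabla^F h)$ survive (up to the scalar factors being treated as constants along $X$). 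For the warped product, $\nabla_X\partial_t=\frac{f'}{f}X$ (standard), contributing $-\frac{f'\circ h}{(f\circ h)^2}X$, so its $g$-inner product with $Y$ gives $-\frac{f'\circ h}{(f\circ h)^2}g(X,Y)$; with the sign in $B$ this is $+\frac{f'\circ h}{(f\circ h)^2}g(X,Y)$. For the second term I need to relate the ambient $\nabla_X(\nabla^F h)$ to the fibre Levi-Civita connection $\nabla^F$; the warped-product formula gives a correction term proportional to $g_F(X,\nabla^F h)=0$, so effectively $\nabla_X\nabla^F h$ reduces to $\nabla^F_X\nabla^F h$ in the $TF$-directions, and $g(\cdot,Y)=(f\circ h)^2 g_F(\cdot,Y)$. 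Collecting: $-g(-\frac{1}{(f\circ h)^3}\nabla^F_X\nabla^F h, Y)=\frac{1}{f\circ h}g_F(\nabla^F_X\nabla^F h,Y)=\frac{1}{f\circ h}Hess^F_h(X,Y)$. Adding the two contributions gives the claimed formula.

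The main obstacle is bookkeeping the warped-product connection correctly: one must be careful that $X,Y\in\mathcal{S}$ are being viewed both as vectors in $T(I\times F)$ tangent to $L$ and as vectors in $TF$, and that the various "correction" terms in $\nabla^{I\times_f F}$ versus $\nabla^F$ and in $\nabla^{I\times_f F}$ versus $\partial_t$-directions either vanish (thanks to $g_F(X,\nabla^F h)=0$ and $X\perp\partial_t$) or reassemble into the Hessian. I would also double-check that $\nabla_X\xi$ indeed lands in $TL$ (as asserted in the Preliminaries) so that $B$ is well-defined, and that no $\partial_t$-component of $\nabla_X\xi$ spoils the inner product with $Y\in\mathcal{S}$. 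Everything else — the identification of $\xi$ and $\mathcal{S}$ — is a short linear-algebra check using only (\ref{eqhyperluzGRW}).
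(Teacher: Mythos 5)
Your proposal is correct, and it is exactly the computation the paper leaves implicit (Lemma \ref{lemaB} is stated without proof): verify that the displayed $\xi$ is tangent to the graph, null and normalized using $\left\vert\nabla^{F}h\right\vert_{F}=f\circ h$, identify $\mathcal{S}$ with the kernel of $dh$, and then expand $\nabla_{X}\xi$ with the warped-product connection formulas $\nabla_{X}\partial_{t}=\frac{f^{\prime}}{f}X$ and $\nabla_{X}V=\nabla^{F}_{X}V+ff^{\prime}g_{F}(X,V)\partial_{t}$, where the terms from differentiating the scalar coefficients and the $\partial_{t}$-correction vanish because $X(h)=0$ for $X\in\mathcal{S}$. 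Your bookkeeping of signs and of the conformal factor $g=(f\circ h)^{2}g_{F}$ on $\mathcal{S}$ is right and reproduces the stated formula for $B$, consistent with the mean-curvature expressions in Theorem \ref{umbilicGRW}.
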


\begin{theorem}
\label{umbilicGRW} Let $I\times_{f}F$ be a GRW space. If $L$ is a totally
umbilic null hypersurface, then for each $(t_{0},x_{0})\in L$ there exists a
decomposition of $F$ in a neighborhood of $x_{0}$ as a twisted product with
one dimensional base
\[
\big(J\times S,ds^{2}+\mu(s,z)^{2}g_{S}\big),
\]
where $x_{0}$ is identified with $(0,z_{0})$ for some $z_{0}\in S$ and $L$ is
given by
\[
\{(t,s,z)\in I\times J\times S:s=\int_{t_{0}}^{t}\frac{1}{f(r)}dr\}.
\]

Moreover, if $H$ is the null mean curvature of $L$, then
\[
\mu(s,z)=\frac{f(t_{0})}{f(t)}exp\left(  \int_{0}^{s}\frac{H(t,r,z)f(t)^{2}%
}{n-2}dr\right)
\]
for all $(t,s,z)\in L$.

Conversely, if $F$ admits a twisted decomposition in a neighborhood of $x_{0}$
as above, then $L=\{(t,s,z)\in I\times J\times S:s=\int_{t_{0}}^{t}\frac
{1}{f(r)}dr\}$ is a totally umbilic null hypersurface with null mean curvature
\[
H=\frac{n-2}{f(t)^{2}}\left(  f^{\prime}(t)+\frac{\mu_{s}(s,z)}{\mu
(s,z)}\right)  .
\]

\end{theorem}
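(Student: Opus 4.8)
The plan is to work entirely through the local graph description of $L$ and the formula for $B$ in Lemma \ref{lemaB}, turning the umbilic condition into an ODE/PDE for the fibre metric. First I would fix $(t_0,x_0)\in L$ and, using that $\pi:L\to F$ is a local diffeomorphism, write $L$ locally as the graph of $h:\theta\to I$ with $x_0\in\theta$, $h(x_0)=t_0$, and $|\nabla^F h|_F=f\circ h$ by \eqref{eqhyperluzGRW}. The natural coordinate to introduce on $F$ is the ``time along $L$'': set $s$ to be (a reparametrisation of) $h$ so that $\nabla^F s$ is the unit vector field $\nabla^F h/|\nabla^F h|_F = \nabla^F h/(f\circ h)$; concretely, because $L=\{s=\int_{t_0}^t \tfrac{1}{f(r)}dr\}$ is the desired description, I would define $s:\theta\to\mathbb R$ by $s = \int_{t_0}^{h}\tfrac{1}{f(r)}dr$, so that $h = a^{-1}(s)$ for $a(t)=\int_{t_0}^t\tfrac{1}{f}$, and then $\nabla^F s = \nabla^F h/(f\circ h)$ has $|\nabla^F s|_F=1$. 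Its integral curves are unit-speed geodesics of $F$ (this is exactly \eqref{remarkh} rephrased: $\nabla^F_{\nabla^F h}\nabla^F h = (f\circ h)(f'\circ h)\nabla^F h$ forces the integral curves of $\nabla^F h$, after unit reparametrisation, to be geodesics). Hence $s$ is a distance-type function and $S:=\{s=0\}$ is a hypersurface through $x_0=(0,z_0)$; choosing coordinates $z$ on $S$ and flowing along $\nabla^F s$ gives Fermi-type coordinates $(s,z)$ in which $g_F = ds^2 + g_{sz}$, and one must check the cross terms vanish so that $g_F = ds^2 + h_{ij}(s,z)dz^i dz^j$.

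Next I would impose that $L$ is totally umbilic, $B=\rho g$. By Lemma \ref{lemaB}, $\mathcal S\simeq(\nabla^F h)^\perp = (\partial_s)^\perp = $ the span of the $\partial_{z^i}$, and for $X,Y$ in this distribution
\[
B(X,Y)=\frac{f'\circ h}{(f\circ h)^2}g(X,Y)+\frac{1}{f\circ h}\operatorname{Hess}^F_h(X,Y).
\]
Since on $\mathcal S$ we have $g(X,Y) = (f\circ h)^2 g_F(X,Y) = (f\circ h)^2 h_{ij}X^iY^j$, the umbilic condition becomes
\[
\operatorname{Hess}^F_h(\partial_{z^i},\partial_{z^j}) = \lambda(s,z)\, h_{ij}
\]
for a scalar $\lambda$. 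Now $\operatorname{Hess}^F_h$ restricted to $(\nabla^F h)^\perp$ is, up to the factor $|\nabla^F h|_F = f\circ h$ coming from $h' (a^{-1})$, the second fundamental form of the level sets of $h$ (equivalently of $s$), which in Fermi coordinates is $\tfrac12\partial_s h_{ij}$. So the umbilic condition is precisely $\partial_s h_{ij} = 2\,\psi(s,z)\,h_{ij}$ for a scalar function $\psi$, and integrating in $s$ gives $h_{ij}(s,z) = \mu(s,z)^2\,(h_{ij})_{S}(z)$, i.e. $g_F = ds^2 + \mu(s,z)^2 g_S$ with $g_S=h_{ij}|_{s=0}\,dz^idz^j$ and $\mu(0,z)\equiv1$. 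This is the twisted decomposition, and $L=\{t: s(h)=\int_{t_0}^t 1/f\}$ has exactly the stated form.

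For the formula for $\mu$, I would trace the umbilic identity. Taking the $g_F$-trace of $\operatorname{Hess}^F_h = \lambda h_{ij}$ over the $(n-2)$-dimensional distribution $\mathcal S$ and comparing with the expression $B=\rho g$, the null mean curvature is $H=(n-2)\rho$ where $\rho = \tfrac{f'\circ h}{(f\circ h)^2} + \tfrac{1}{(f\circ h)^2}\cdot(\text{tr part of }\operatorname{Hess})$; writing $\operatorname{tr}_{h}(\partial_s h_{\cdot\cdot})/2 = \partial_s\log\sqrt{\det h} $ restricted appropriately, and using $h_{ij}=\mu^2 (h_{ij})_S$ so that the relevant trace is $(n-2)\mu_s/\mu$, I get
\[
\frac{H f(t)^2}{n-2} = f'(t) + \frac{\mu_s(s,z)}{\mu(s,z)},
\]
which is both the ``conversely'' formula for $H$ and, rearranged as $\partial_s\log\mu = \tfrac{H f(t)^2}{n-2} - f'(t) = \tfrac{Hf(t)^2}{n-2} - \tfrac{d}{ds}\log f(t)$ (using $dt/ds = f(t)$), integrates with $\mu(0,z)=1$ to
\[
\mu(s,z) = \frac{f(t_0)}{f(t)}\exp\!\left(\int_0^s \frac{H(t,r,z)f(t)^2}{n-2}\,dr\right).
\]
The converse direction is then a direct computation: given $g_F = ds^2+\mu^2 g_S$, the set $L=\{s=\int_{t_0}^t 1/f\}$ is a null hypersurface by \eqref{eqhyperluzGRW} (since with $h=a^{-1}(s)$, $\nabla^F h = (f\circ h)\partial_s$ has $|\nabla^F h|_F = f\circ h$), one plugs $h$ into the $B$-formula of Lemma \ref{lemaB}, computes $\operatorname{Hess}^F_h$ on $\partial_{z^i}$'s (which only sees $\tfrac12\partial_s h_{ij}\cdot|\nabla^F h|_F/\!\!\ldots$, proportional to $\mu_s/\mu \cdot h_{ij}$), and reads off $B=\rho g$ with $\rho = \tfrac{1}{f(t)^2}(f'(t)+\mu_s/\mu)$, hence $H=(n-2)\rho$.

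The main obstacle, and the step deserving genuine care, is the first one: showing that the level-set foliation of $h$ (equivalently of $s$) together with the geodesic flow of $\nabla^F s$ produces \emph{clean} Fermi coordinates in which $g_F$ splits as $ds^2 + h_{ij}(s,z)dz^idz^j$ with no $ds\,dz$ cross terms, and then correctly identifying $\operatorname{Hess}^F_h$ on the horizontal distribution with $\tfrac12\partial_s h_{ij}$ times the scaling from $h=a^{-1}(s)$. The vanishing of cross terms is exactly the classical Gauss-lemma-type statement for the distance function $s$ (using $|\nabla^F s|=1$ and $\nabla^F_{\nabla^F s}\nabla^F s=0$, which is \eqref{remarkh} after reparametrisation), so it is standard, but it must be invoked carefully since $\mathcal S$ is only identified with a distribution on $F$ up to the identification in Lemma \ref{lemaB}. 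Everything after that is bookkeeping with \eqref{remarkh}, Lemma \ref{lemaB}, and a single first-order ODE in $s$.
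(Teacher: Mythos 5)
Your proposal is correct and follows essentially the same route as the paper: the paper also works from the graph function $h$, Equation (\ref{remarkh}) and Lemma \ref{lemaB}, normalizes $E=\frac{1}{|\nabla^{F}h|_{F}}\nabla^{F}h$, and shows its flow is foliated and conformal on the orthogonal leaves, which is exactly your Fermi-coordinate statement $\partial_{s}h_{ij}=2\psi h_{ij}$ with $\psi=\rho(f\circ h)^{2}-f'\circ h$, integrated to give the twisted factor $\mu$ with $\mu(0,z)=1$. The identification $h=c^{-1}(s)$, the rewriting of $\mu$ via $\frac{d}{ds}\ln f(h(s))=f'(h(s))$, and the converse by direct check of (\ref{eqhyperluzGRW}) and Lemma \ref{lemaB} coincide with the paper's argument.
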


\begin{proof}
Suppose that $L$ is given by the graph of certain map $h:\theta\rightarrow
I$ in a neighborhood of $(t_{0},x_{0})$ and $B=\rho g$. From Formula
(\ref{remarkh}) and Lemma \ref{lemaB} it holds
\begin{align*}
\nabla_{\nabla^{F}h}^{F}\nabla^{F}h  &  =(f\circ h)(f^{\prime}\circ
h)\nabla^{F}h,\\
\nabla_{v}^{F}\nabla^{F}h  &  =\left(  \rho(f\circ h)^{3}-(f\circ
h)(f^{\prime}\circ h)\right)  v
\end{align*}
for all $v\perp\nabla^{F}h$. Since $\left\vert \nabla^{F}h\right\vert
_{F}=f\circ h$, we have that $\left\vert \nabla^{F}h\right\vert _{F}$ is
constant through the level hypersurfaces of $h$. Thus, if we call $E=\frac
{1}{\left\vert \nabla^{F}h\right\vert _{F}}\nabla^{F}h$, it is easy to show
that
\begin{align*}
\nabla_{E}^{F}E  &  =0,\\
\nabla_{v}^{F}E  &  =\big(\rho\left(  f\circ h\right)  ^{2}-(f^{\prime}\circ
h)\big)v
\end{align*}
for all $v\perp\nabla^{F}h$. From these equations it follows that $E$ is
closed and
\begin{equation}
\left(  L_{E}g_{F}\right)  (v,w)=2\big(\rho\left(  f\circ h\right)
^{2}-(f^{\prime}\circ h)\big)g_{F}(v,w) \label{eq1teo}%
\end{equation}
for all $v,w\in TF$ with $v,w\perp\nabla^{F}h$.

The following argument is local, so for simplicity we can suppose without loss
of generality that $E$ is complete. Call $S_{z}$ the leaf of $E^{\perp}$
through $z\in F$. Being $E$ closed, the flow $\phi$ of $E$ is foliated, that
is, $\phi_{s}(S_{z})=S_{\phi_{s}(z)}$ for all $z\in L$ and $s\in\mathbb{R}$.
Using this, it is easy to check that $\phi:\mathbb{R}\times S_{x_{0}%
}\rightarrow F$ is onto and a local diffeomorphism. Moreover, from Equation
(\ref{eq1teo}), $\phi_{s}:S_{x_{0}}\rightarrow S_{\phi_{s}(x_{0})}$ is a
conformal diffeomorphism with conformal factor
\[
exp\left(  2\int_{0}^{s}\left(  \rho(\phi_{r}(z))f(h(\phi_{r}(z)))^{2}%
-f^{\prime}(h(\phi_{r}(z)))\right)  dr\right)
\]
and it follows that $\phi^{\ast}(g_{F})=ds^{2}+\mu^{2}g|_{S_{x_{0}}}$, being
\[
\mu(s,z)=exp\left(  \int_{0}^{s}\left(  \rho(\phi_{r}(z))f(h(\phi_{r}%
(z)))^{2}-f^{\prime}(h(\phi_{r}(z)))\right)  dr\right)  .
\]

Since $E$ is identified with $\partial_{s}$, in this decomposition $h$ only
depends on $s$ and $h^{\prime}(s)>0$. Thus, from Equation (\ref{eqhyperluzGRW}%
) we have that $h(s)=c^{-1}(s)$ being $c(t)=\int_{t_{0}}^{t}\frac{1}{f(r)}dr$.
Moreover, $\left(  \ln f\circ h\right)  ^{\prime}=f^{\prime}\circ h$, so the
above expression for $\mu$ can be written as
\[
\mu(s,z)=\frac{f(t_{0})}{f(h(s))}exp\left(  \int_{0}^{s}\rho(\phi
_{r}(z))f(h(r))^{2}dr\right)  .
\]

For the converse, Equation (\ref{eqhyperluzGRW}) can be directly checked and
applying Lemma \ref{lemaB} we get the result.
\end{proof}

Observe that $\mu(0,z)=1$ for all $z\in S$. Moreover, this theorem can be
applied to any null surface in a three dimensional GRW space, since they are
always totally umbilic.

\begin{example}
Totally geodesic null hypersurfaces in $\mathbb{R}_{1}^{n}$ are given by null
hyperplanes. In $\mathbb{S}_{1}^{n}$ and $\mathbb{H}_{1}^{n}$ we can obtain
totally geodesic null hypersurfaces intersecting them with null planes through
the origin of $\mathbb{R}_{1}^{n+1}$ and $\mathbb{R}_{2}^{n+1}$ respectively.
Table \ref{tabla1}  shows how Theorem \ref{umbilicGRW} is fulfilled in these
particular cases. In this table
we call
 $A(s)=2\arg\tanh\left(\tan\left(\frac{s}{2}\right)\right)$,
 $B(s)=2\arg\tan\left(\tanh\left(\frac{s}{2}\right)\right)$
 and $\mathbb{H}$ the portion of $\mathbb{H}^n_1$ given by $\left(-\frac{\pi}{2},\frac{\pi}{2}\right)\times_{\cos(t)}\mathbb{H}^{n-1}$.
\end{example}

\begin{table}[h]
\caption{Totally geodesic null hypersurfaces in space forms.}
\label{tabla1}
\begin{tabular}{lll}
\hline\noalign{\smallskip}
\textbf{Space form} & \textbf{Fibre} & \textbf{Tot. geod. null hypersurface} \\
\noalign{\smallskip}\hline\noalign{\smallskip}
$\mathbb{R}^n_1=\mathbb{R}\times\mathbb{R}^{n-1}$  & $\mathbb{R}\times\mathbb{R}^{n-2}$ & $\left\{(s,s,z)\in \mathbb{R}\times\mathbb{R}\times\mathbb{R}^{n-2}\right\}$  \\
$\mathbb{S}^{n}_1=\mathbb{R}\times_{\cosh(t)}\mathbb{S}^{n-1}$  & $\left(-\frac{\pi}{2},\frac{\pi}{2}\right)\times_{\cos(s)}\mathbb{S}^{n-2}$ & $\Big\{\left(A(s),s,z\right)\in\mathbb{R}\times \left(-\frac{\pi}{2},\frac{\pi}{2}\right)\times\mathbb{S}^{n-2}\Big\}$ \\
$\mathbb{H}$ & $\mathbb{R}\times_{\cosh(s)}\mathbb{H}^{n-2}$ & $\Big\{\left(B(s),s,z\right)\in \left(-\frac{\pi}{2},\frac{\pi}{2}\right)\times\mathbb{R}\times\mathbb{H}^{n-2}\Big\}$ \\
\end{tabular}
\end{table}

\begin{example}
\label{ejemplodescomposicionconos} From Proposition \ref{conosumbilicos} and
Theorem \ref{umbilicGRW}, nullcones in a Robertson-Walker space induce a
twisted decomposition of the fibre. Indeed. Consider, for example,
$M=I\times_{f}\mathbb{S}^{n}$ and fix $(t_{0},x_{0})\in C_{(t_{\ast},x_{\ast
})}^{+}$. If we call $\delta=d_{\mathbb{S}^{n}}(x_{\ast},x_{0})$, then from
Proposition \ref{conoGRW} we have $\int_{t_{\ast}}^{t_{0}}\frac{1}%
{f(r)}dr=\delta$. Take the local decomposition of $\mathbb{S}^{n}$ given by
$\left(  0,\pi\right)  \times_{\sin(u)}\mathbb{S}^{n-1}$, where $d_{\mathbb{S}%
^{n}}(x_{\ast},x)=u$. In this decomposition, the point $x_{0}$ is identified
with $(\delta,z_{0})$ for some $z_{0}\in\mathbb{S}^{n-1}$. Thus, if we call
$s=u-\delta$ we get the decomposition $\left(  -\delta,\pi-\delta\right)
\times_{\sin(s+\delta)}\mathbb{S}^{n-1}$, where $x_{0}$ is identified with
$(0,z_{0})$ and $C_{(t_{\ast},x_{\ast})}^{+}$ is given by
\[
\{(t,s,z)\in I\times\left(  -\delta,\pi-\delta\right)  \times\mathbb{S}%
^{n-1}:\int_{t_{0}}^{t}\frac{1}{f(r)}dr=s\}
\]
as Theorem \ref{umbilicGRW} asserts.

Analogously, nullcones in $I\times_{f}\mathbb{R}^{n}$ induce the decomposition
of $\mathbb{R}^{n}$ given by $\left(  -\delta,\infty\right)  \times_{s+\delta
}\mathbb{S}^{n-1}$, whereas nullcones in $I\times_{f}\mathbb{H}^{n}$ induce
the decomposition of $\mathbb{H}^{n}$ given by $\left(  -\delta,\infty\right)
\times_{\sinh(s+\delta)}\mathbb{S}^{n-1}$.
\end{example}

Given a totally umbilic null hypersurface $L$, we can construct another one,
which we call dual of $L$, simply by changing the sign of the parameter in
the base of the twisted decomposition of the fibre induced by $L$.

\begin{corollary}
\label{hipersuperficiedual} Let $I\times_{f}F$ be a GRW space and $L$ a
totally umbilic null hypersurface. For each $(t_{0},x_{0})\in L$ we can
construct another totally umbilic null hypersurface $\widetilde{L}%
_{(t_{0},x_{0})}$, which we call dual of $L$ through $(t_{0},x_{0})$.

Specifically, if $L$ induces a twisted decomposition $J\times_{\mu}S$ of $F$
in a neighborhood of $x_{0}$ where $L$ is $\{(t,s,z)\in I\times J\times
S:s=\int_{t_{0}}^{t}\frac{1}{f(r)}dr\}$, then $\widetilde{L}_{(t_{0},x_{0})}$
is given by
\[
\{(t,s,z)\in I\times J\times S:s=\int_{t}^{t_{0}}\frac{1}{f(r)}dr\}
\]
and its null mean curvature is
\[
\widetilde{H}=\frac{n-2}{f(t)^{2}}\left(  f^{\prime}(t)-\frac{\mu_{s}%
(s,z)}{\mu(s,z)}\right)  .
\]

\end{corollary}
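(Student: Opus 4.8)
The plan is to reduce everything to an application of Theorem~\ref{umbilicGRW} by exhibiting the ``sign reversal in the base'' as an honest twisted decomposition and then reading off the mean curvature from the converse statement of that theorem. First I would fix $(t_0,x_0)\in L$ and invoke Theorem~\ref{umbilicGRW} to obtain the twisted decomposition $\bigl(J\times S, ds^2+\mu(s,z)^2 g_S\bigr)$ of $F$ near $x_0$, with $x_0$ identified with $(0,z_0)$, in which $L$ corresponds to $s=\int_{t_0}^{t}\frac{1}{f(r)}dr$. The key observation is that the map $s\mapsto -s$ is an isometry of $\mathbb{R}$, so $\bigl(J'\times S, ds^2+\nu(s,z)^2 g_S\bigr)$ with $J'=-J$ and $\nu(s,z)=\mu(-s,z)$ is again a twisted decomposition of the same open set of $F$, still with $x_0=(0,z_0)$ and still satisfying $\nu(0,z)=1$. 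Hence the converse direction of Theorem~\ref{umbilicGRW} applies verbatim to this new decomposition and produces a totally umbilic null hypersurface, namely $\{(t,s,z): s=\int_{t_0}^{t}\frac{1}{f(r)}dr\}$ in the new coordinates; rewriting this in the original $s$-coordinate of $F$ (undoing $s\mapsto -s$) gives exactly $\{(t,s,z)\in I\times J\times S: s=\int_{t}^{t_0}\frac{1}{f(r)}dr\}$, which is the claimed description of $\widetilde{L}_{(t_0,x_0)}$.

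For the mean curvature, the converse part of Theorem~\ref{umbilicGRW} gives that a hypersurface built from a twisted decomposition with warping function $\nu$ has null mean curvature $\frac{n-2}{f(t)^2}\bigl(f'(t)+\frac{\nu_s(s,z)}{\nu(s,z)}\bigr)$. Here $\nu(s,z)=\mu(-s,z)$, so $\nu_s(s,z)=-\mu_s(-s,z)$ and therefore $\frac{\nu_s(s,z)}{\nu(s,z)}=-\frac{\mu_s(-s,z)}{\mu(-s,z)}$; evaluating along $\widetilde{L}$, where the point with original-$F$-coordinate $s$ corresponds to new-coordinate $-s$, this becomes $-\frac{\mu_s(s,z)}{\mu(s,z)}$, yielding $\widetilde{H}=\frac{n-2}{f(t)^2}\bigl(f'(t)-\frac{\mu_s(s,z)}{\mu(s,z)}\bigr)$ as stated. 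One should double-check the bookkeeping of which sign of $s$ is being plugged in where, since both the hypersurface equation and the warping function get reparametrized; writing the new coordinate explicitly as $\bar s=-s$ throughout avoids confusion.

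I expect the only real obstacle to be notational rather than mathematical: making sure the identification of neighborhoods is consistent (the interval $J$ must be replaced by $-J$ so that $\bar s$ ranges over an interval containing $0$, and $S$, $z_0$, $g_S$ are untouched), and that the relation $s=\int_{t_0}^t\frac{1}{f}$ in the new chart genuinely transcribes to $s=\int_t^{t_0}\frac1f$ in the old one. Everything else is a direct citation of the two halves of Theorem~\ref{umbilicGRW}. I would therefore keep the proof short: set up $\bar s=-s$, note $ds^2$ is unchanged and $\mu(-\bar s,z)^2 g_S$ is the new warping term, apply the converse of Theorem~\ref{umbilicGRW}, and compute the logarithmic derivative of the warping function to get $\widetilde H$.
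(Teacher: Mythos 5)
Your proposal is correct and follows essentially the same route as the paper: reflect the base coordinate via the isometry $s\mapsto-s$, view the result as a new twisted decomposition with warping function $\mu(-\,\cdot\,,z)$, apply the converse half of Theorem~\ref{umbilicGRW}, and pull back to identify $\widetilde{L}_{(t_0,x_0)}$. The only cosmetic difference is that you read $\widetilde{H}$ off the converse statement of Theorem~\ref{umbilicGRW} (with the reflected warping function and the sign bookkeeping you describe), whereas the paper cites Lemma~\ref{lemaB} directly, which amounts to the same computation.
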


\begin{proof}
Suppose $J=(-\varepsilon,\varepsilon)$ and consider the coordinate change
\[
\psi:I\times_{f}\left(  J\times_{\mu(s,z)}S\right)  \rightarrow I\times
_{f}\left(  J\times_{\mu(-u,z)}S\right)
\]
given by the isometry $\psi(t,s,z)=(t,-s,z)$. Using Theorem \ref{umbilicGRW},
in the codomain of $\psi$, the twisted decomposition $du^{2}+\mu
(-u,z)^{2}g_{S}$ induces the totally umbilic null hypersurface given by
$\{(t,u,z)\in I\times J\times S:u=\int_{t_{0}}^{t}\frac{1}{f(r)}dr\}$. The
inverse image of this hypersurface is $\widetilde{L}_{(t_{0},x_{0}%
)}=\{(t,s,z):s=\int_{t}^{t_{0}}\frac{1}{f(r)}dr\}$. Its null mean curvature
can be easily computed using Lemma \ref{lemaB}.
\end{proof}

\begin{example}
\label{Ej2}Consider $\mathbb{R}_{1}^{n}=\mathbb{R}\times\mathbb{R}^{n-1}$ and
$L=C_{(0,0)}^{+}$. Fixed $(t_{0},x_{0})\in L$, from Example
\ref{ejemplodescomposicionconos} and the above corollary, the dual
hypersurface through $(t_{0},x_{0})$ is given by
\[
\{(t,s,z)\in\mathbb{R}\times(-t_{0},\infty)\times\mathbb{S}^{n-2}%
:s=t_{0}-t\}=C_{(2t_{0},0)}^{-}.
\]

Consider now $\mathbb{S}_{1}^{n}=\mathbb{R}\times_{\cosh(t)}\mathbb{S}^{n-1}$
and $L=C_{(0,x_{\ast})}^{+}$. As before, fixed $(t_{0},x_{0})\in L$,
$\widetilde{L}_{(t_{0},x_{0})}$ is given by
\[
\{(t,s,z)\in\mathbb{R}\times(-\delta,\pi-\delta)\times\mathbb{S}^{n-2}%
:s=\int_{t}^{t_{0}}\frac{1}{\cosh(r)}dr\},
\]
where $\delta=\int_{0}^{t_{0}}\frac{1}{\cosh(r)}dr$ and $s=d_{\mathbb{S}%
^{n-1}}(x_{\ast},x)-\delta$. Take $t_{c}>0$ such that
\[
\int_{0}^{t_{c}}\frac{1}{\cosh(r)}dr=\int_{t_{c}}^{\infty}\frac{1}{\cosh
(r)}dr=\frac{\pi}{4}.
\]

If $t_{0}<t_{c}$, then there exists $t_{s}$ such that $\delta=\int_{t_{0}%
}^{t_{s}}\frac{1}{\cosh(r)}dr$ and using Proposition \ref{conoGRW}, it is easy
to show that $\widetilde{L}_{(t_{0},x_{0})}=C_{(t_{s},x_{\ast})}^{-}$.

If $t_{c}<t_{0}$, using that $d_{\mathbb{S}^{n-1}}(x^{\ast},x_{\ast})=\pi$,
where $x^{\ast}$ is the antipodal of $x_{\ast}$, we write $s=\pi-d_{\mathbb{S}%
^{n-1}}(x^{\ast},x)-\delta$. If we take $t_{l}$ such that $\pi-\delta
=\int_{t_{l}}^{t_{0}}\frac{1}{\cosh(r)}dr$, then it follows that
$\widetilde{L}_{(t_{0},x_{0})}=C_{(t_{l},x^{\ast})}^{+}$.

Finally, suppose that $t_{0}=t_{c}$. In this case,
\[
\widetilde{L}_{(t_{0},x_{0})}=\{(t,s,z)\in\mathbb{R}\times\left(  -\frac{\pi
}{4},\frac{3\pi}{4}\right)  \times\mathbb{S}^{n-2}:s-\frac{\pi}{4}=-2\arg
\tan\left(  \tanh\left(  \frac{t}{2}\right)  \right)  \}.
\]

Reparametrizing the $s$ coordinate, it follows that it is the totally geodesic
null hypersurface given in Table \ref{tabla1}.
\end{example}

The above example shows that the dual construction in Minkowski space is a
time reflection, in the sense that the dual of a future nullcone is a past
nullcone. However, in the De Sitter space it is more involved, since the dual
of a future nullcone through a given point on it can be a past nullcone, a
totally geodesic null hypersurface, or even another future nullcone, depending on
the situation of the given point.

As an immediate corollary of Theorem \ref{umbilicGRW}, we can give the
following obstruction to the existence of totally umbilic (geodesic) null hypersurfaces.

\begin{corollary}
\label{corobs} If the fibre of a GRW space does not admit any local
decomposition as a twisted (warped) product with one dimensional base, then it
does not exist any totally umbilic (geodesic) null hypersurface.
\end{corollary}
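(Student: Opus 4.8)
The statement is essentially the contrapositive of Theorem \ref{umbilicGRW}, so the plan is to derive it directly from that theorem rather than to do any new computation. First I would spell out the logical structure: suppose, aiming at a contradiction, that the fibre $F$ of the GRW space $I\times_f F$ admits no local decomposition as a twisted product with one-dimensional base, yet there exists a totally umbilic null hypersurface $L$. Pick any point $(t_0,x_0)\in L$. By the direct implication of Theorem \ref{umbilicGRW}, the existence of $L$ forces a decomposition of $F$ in a neighbourhood of $x_0$ as $\big(J\times S,\,ds^2+\mu(s,z)^2 g_S\big)$, i.e.\ precisely a twisted product with one-dimensional base. This contradicts the hypothesis on $F$, so no such $L$ can exist.

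For the totally geodesic (parenthetical) case I would add one short observation. If $L$ is totally geodesic then $B\equiv 0$, so the umbilic factor is $\rho\equiv 0$; tracing back through the proof of Theorem \ref{umbilicGRW}, the twisting function becomes
\[
\mu(s,z)=\frac{f(t_0)}{f(h(s))},
\]
which depends only on $s$ (through $h(s)$) and not on $z$. Hence the induced decomposition of $F$ is in fact a \emph{warped} product $ds^2+\mu(s)^2 g_S$ with one-dimensional base. So if $F$ admits no such local warped decomposition, it admits no totally geodesic null hypersurface. Equivalently, one can invoke the formula $\widetilde H$ together with $H$ in Theorem \ref{umbilicGRW}: $L$ is totally geodesic iff $H=0$ and the dual $\widetilde L$ also has $\widetilde H=0$, which by those formulas forces $\mu_s/\mu=-f'(t)=\mu_s/\mu$ with opposite signs, hence $\mu_s=0$, again giving a warped product.

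I do not expect a genuine obstacle here: the corollary is a formal consequence, and the only point requiring a sentence of care is verifying that in the totally geodesic case the word ``twisted'' can be sharpened to ``warped,'' which follows because $\rho\equiv0$ kills the $z$-dependence of $\mu$. The proof is therefore two or three lines: state the contrapositive, cite the forward direction of Theorem \ref{umbilicGRW} for the twisted case, and note the $\rho\equiv0$ specialization for the warped case.
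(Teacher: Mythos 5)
Your main argument is correct and is exactly how the paper treats this statement (it offers no separate proof, calling it an immediate consequence of Theorem \ref{umbilicGRW}): the contrapositive of the direct implication handles the twisted case, and for the geodesic case $\rho\equiv 0$ gives $\mu(s,z)=f(t_0)/f(h(s))$, so the induced twisted decomposition is in fact warped. However, your ``equivalently'' alternative is wrong and should be dropped: $L$ being totally geodesic is a condition on $L$ alone (umbilic with $H\equiv 0$, i.e.\ $\rho\equiv 0$); it neither requires nor implies $\widetilde{H}=0$ for the dual hypersurface, and the conclusion $\mu_s=0$ is false in general --- for instance, the totally geodesic null hypersurfaces of $\mathbb{S}^1_n=\mathbb{R}\times_{\cosh t}\mathbb{S}^{n-1}$ induce the warped decomposition with $\mu(s)=\cos s$ (Table \ref{tabla1}), where $\mu_s\not\equiv 0$; moreover a warped product only requires $\mu$ to be independent of $z$, not of $s$. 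The first argument alone suffices.
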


If $\mathcal{K}(\Pi)\neq0$ for any null plane at a point $p$ in an arbitrary
Lorentzian manifold, then Equation (\ref{Eq1}) implies that it does not exist
any totally geodesic null hypersurface through $p$. In a GRW space
$I\times_{f}F$, the null sectional curvature of a null plane $\Pi
=span(v,u=-\partial_{t}+w)$, where $v,w\in TF$ are unitary and orthogonal, is
given by
\[
\mathcal{K}_{u}(\Pi)=\frac{K^{F}\left(  span(v,w)\right)  +f^{\prime
2}-ff^{\prime\prime}}{f^{2}}.
\]

Therefore, this obstruction to the existence of totally geodesic hypersurfaces
involves both the curvature of the fibre and the warping function. However,
the obstruction given in Corollary \ref{corobs} is more general because it
includes totally umbilic null hypersurfaces and only depends on the fibre.

\begin{example}
\label{productoesferas} In a Riemannian twisted product manifold $J\times
_{\mu}S$, the sectional curvature of any plane containing $\partial_{s}$ is
$\frac{-1}{\mu}Hess_{\mu}(\partial_{s},\partial_{s})$. Therefore,
$\mathbb{S}^{2}\times\mathbb{S}^{2}$ does not admit any local twisted product
decomposition as above, since for any vector we can find two planes
containing it with different sectional curvatures. Applying Corollary
\ref{corobs}, in a GRW space $I\times_{f}\left(  \mathbb{S}^{2}\times
\mathbb{S}^{2}\right)  $ there are not totally umbilic null hypersurfaces.
\end{example}

\begin{example}
Consider the twisted product $F=\mathbb{R}\times_{\mu}\mathbb{R}^{n}$, where
$\mu(s,z)=e^{s}+|z|^{2}$. A curvature analysis as before shows that it does
not admit another local decomposition as a twisted nor warped product with a
one dimensional base. Therefore, from Corollary \ref{hipersuperficiedual}, in a
GRW space $I\times_{f}F$ there are exactly two totally umbilic null
hypersurface through each point and using Corollary \ref{corobs}, it does not
have any totally geodesic null hypersurface.
\end{example}

Theorem \ref{umbilicGRW} does not hold for timelike nor spacelike
hypersurfaces. Indeed, in $\mathbb{R}\times\mathbb{S}^{2}\times\mathbb{S}^{2}$
we can find totally geodesic timelike or spacelike hypersurfaces, although, as
it was shown in Example \ref{productoesferas}, $\mathbb{S}^{2}\times
\mathbb{S}^{2}$ does not admit local decompositions as a twisted product.
However, it is known that a totally umbilic timelike hypersurface in a GRW
space must be itself a GRW space, \cite{GutOle09}.

The following lemma gives us another characterization of an open set of a
nullcone near its vertex.

\begin{lemma}
\label{lemacaractcono} Let $I\times_{f}F$ be a GRW space with $F$ complete, take
 $(t_{0},x_{0})\in I\times F$ and suppose that $F$ decomposes in a neighborhood of $x_{0}$ as
a twisted product%
\[
\left(  (a,b)\times S,ds^{2}+\mu(s,z)^{2}g_{S}\right)  ,
\]
where $-\infty<a<0<b\leq\infty$ ($-\infty\leq a<0<b<\infty$), $S$ is connected and
$x_{0}$ is identified with $(0,z_{0})$ for some $z_{0}\in S$. The null
hypersurface
\[
L=\{(t,s,z)\in I\times(a,b)\times S:s=\int_{t_{0}}^{t}\frac{1}{f(r)}dr\}
\]
is contained in a future (past) local nullcone if and only if

\begin{enumerate}
\item $\lim_{s\rightarrow a^{+}}\mu(s,z)=0$ for all $z\in S$ ($\lim
_{s\rightarrow b^{-}}\mu(s,z)=0$ for all $z\in S$).

\item It exists $t_{*}\in I$ with $\int^{t_{*}}_{t_{0}}\frac{1}{f(r)}dr=a$
$\left(  \int_{t_{0}}^{t_{*}}\frac{1}{f(r)}dr=b\right)  $.
\end{enumerate}
\end{lemma}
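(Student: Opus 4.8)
The plan is to use Lemma \ref{lema1} to reduce the statement to a condition on $\nabla^F h$ and on the limit of $h$ at the vertex. By Theorem \ref{umbilicGRW} (its converse direction) the hypersurface $L$ is a totally umbilic null hypersurface, and in the twisted coordinates it is exactly the graph of the function $h(s,z)=c^{-1}(s)$ where $c(t)=\int_{t_0}^t\frac{1}{f(r)}dr$; in particular $h$ depends only on $s$. Hence $\nabla^F h = h'(s)\,\partial_s$, which is automatically proportional to $\partial_s$. So the content of Lemma \ref{lema1} here is: $L$ is contained in a local future nullcone at some point $p_*=(t_*,x_*)$ iff (i) $\partial_s$ is, along the leaf $s=0$, proportional to the local position vector field $P^F$ at $x_*$ in $F$, and (ii) $\lim_{x\to x_*} h(x) = t_*$. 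I will show these two conditions translate precisely into the two itemized conditions of the lemma, with $x_*$ the "collapse point'' of the twisted product as $s\to a^+$.

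First I would analyze condition 1. The twisted warping function $\mu(s,z)$ with $\mu(0,z)=1$ (from Theorem \ref{umbilicGRW}) controls the metric on the slices $\{s\}\times S$. I claim $\lim_{s\to a^+}\mu(s,z)=0$ for all $z$ is equivalent to the $s$-lines $s\mapsto(s,z)$ all emanating from a single point $x_*$ of $F$ as $s\to a^+$, i.e. to $(a,b)\times_\mu S$ being, near that end, a normal neighborhood of $x_*$ with $\partial_s$ the (normalized) position vector field. The forward implication: if $\mu\to 0$, the $S$-slices shrink to zero diameter (using $S$ connected and completeness of $F$ to control the geometry), so all the geodesics $s\mapsto(s,z)$, which are unit-speed since the $ds^2$ part is standard, converge to the same limit point $x_*:=\lim_{s\to a^+}(s,z)$; moreover near $x_*$ the exponential map is $(sv)\mapsto(s-a,z)$ in suitable coordinates, giving $P^F \parallel \partial_s$ along any slice, in particular along $s=0$. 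The converse is the standard fact that in geodesic polar coordinates around $x_*$ the warping function vanishes at the origin. This gives the equivalence "condition 1 $\iff$ $\nabla^F h \parallel P^F$ at $x_*$'' required by Lemma \ref{lema1}, once we know $x_*$ exists; and condition 1 is exactly what produces $x_*$.

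Next, condition 2: by the computation in Theorem \ref{umbilicGRW}, on $L$ we have $s=\int_{t_0}^{t}\frac{1}{f(r)}dr$, equivalently $h(s)=c^{-1}(s)$. As $x\to x_*$, i.e. $s\to a^+$, we get $\lim h = \lim_{s\to a^+} c^{-1}(s) = c^{-1}(a)$, and this limit is a point $t_*\in I$ precisely when there exists $t_*\in I$ with $c(t_*)=a$, i.e. $\int_{t_0}^{t_*}\frac{1}{f(r)}dr=a$ — this is literally item 2. Then $\lim_{x\to x_*}h(x)=t_*$, which is the second hypothesis of Lemma \ref{lema1}. Conversely, if $L$ lies in a local nullcone at $p_*=(t_*,x_*)$, then Lemma \ref{lema1} forces both $\lim h = t_*$ (giving item 2) and $\nabla^F h\parallel P^F$ at $x_*$; combined with the fact that the vertex must be the collapse point of the slices, this forces $\mu\to 0$ at the relevant end (item 1). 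Assembling: conditions 1 and 2 together are equivalent to the two hypotheses of Lemma \ref{lema1} for the point $p_*=(t_*,x_*)$, hence to $L$ being contained in a local future nullcone. The past case is the mirror image, swapping the roles of $a$ and $b$ and of $\int_{t_0}^{t}$ and $\int_{t}^{t_0}$; I would just remark that it follows by the same argument (or by applying the future case to the time-reversed GRW space, as in Corollary \ref{hipersuperficiedual}).

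The main obstacle I expect is the careful justification that "$\mu(s,z)\to 0$ for all $z$'' genuinely forces the $S$-slices to collapse to a single point $x_*\in F$, rather than, say, escaping to infinity or collapsing to a lower-dimensional set: this is where completeness of $F$ and connectedness of $S$ are essential, and one must check that $\partial_s$ extends smoothly through $x_*$ and equals the local position vector field there (equivalently, that there is no conical singularity). The cleanest way is to note that $\mu(s,z)\to 0$ together with $\mu(0,z)=1$ and the twisted-product structure makes the metric near that end agree with a metric in geodesic polar form, and then invoke smoothness of $F$ to identify the center; the remaining bookkeeping (that the interval $(a,b)$, the sign conditions on $a,b$, and the existence of $t_*$ match up) is routine given Proposition \ref{conoGRW} and Lemma \ref{lema1}.
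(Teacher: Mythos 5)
Your argument is correct and its core coincides with the paper's: the substantive step in both is to use completeness of $F$ (the $s$-curves are unit geodesics, hence have limits as $s\rightarrow a^{+}$), the bound $d_{F}((s,z),(s,z'))\leq\int_{0}^{d}\mu(s,\sigma(r))\,dr$ together with connectedness of $S$ to show that condition (1) collapses all slices to a single vertex $x_{\ast}$, so that the $s$-lines become radial geodesics from $x_{\ast}$, while condition (2) supplies the time coordinate $t_{\ast}$ of the vertex. Where you differ is only the finishing criterion: the paper computes $d_{F}(x_{\ast},(s,z))=s-a$ and verifies the distance identity $d_{F}(x_{\ast},x)=\int_{t_{\ast}}^{t}\frac{1}{f(r)}dr$, concluding by the description of nullcones in Proposition \ref{conoGRW}, whereas you verify the hypotheses of Lemma \ref{lema1}, namely $\nabla^{F}h=h'(s)\partial_{s}\parallel P^{F}$ and $\lim_{x\rightarrow x_{\ast}}h(x)=t_{\ast}$ with $h=c^{-1}$, $c(t)=\int_{t_{0}}^{t}\frac{1}{f(r)}dr$. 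Both finishes are immediate once the vertex is identified, so yours is a legitimate variant; it avoids the explicit distance formula at the cost of checking that the relevant points lie in a normal neighborhood of $x_{\ast}$ where $P^{F}$ is defined, which is the same local caveat implicit in the paper's use of $d_{F}(x_{\ast},(s,z))=s-a$. Two remarks: the ``conical singularity'' worry you flag is not actually needed, since you never have to extend the twisted metric smoothly through $x_{\ast}$ --- it suffices that each $s$-curve is a geodesic of the complete manifold $F$ with limit $x_{\ast}$, hence of the form $\exp_{x_{\ast}}^{F}((s-a)u_{z})$, which already gives $P^{F}\parallel\partial_{s}$; and your converse is as terse as the paper's (which simply calls it straightforward), but the one assertion you lean on there --- that the vertex of the nullcone must coincide with the collapse point of the slices at the end $s\rightarrow a^{+}$ --- is precisely what a careful converse would have to argue, so it would deserve a line of justification in a full write-up.
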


\begin{proof}
Suppose that (1) and (2) hold. Since the integral curves of $\partial_{s}$ are
unitary geodesics and $F$ is complete, it exists $\lim_{s\rightarrow a^{+}%
}(s,z)$ for all $z\in S$. Fix $z,z^{\prime}\in S$ two distinct points such
that there exists $\sigma(r)$ a unitary geodesic in $S$ with $\sigma(0)=z$ and
$\sigma(d)=z^{\prime}$. If we call $\gamma_{s}(r)=(s,\sigma(r))$, then%
\[
d_{F}(\gamma_{s}(0),\gamma_{s}(d))\leq length(\gamma_{s})=\int_{0}^{d}%
\mu(s,\sigma(r))dr
\]
and so $lim_{s\rightarrow a^{+}}d_{F}(\gamma_{s}(0),\gamma_{s}(d))=0$.
Therefore, since $S$ is connected, $\lim_{s\rightarrow a^{+}}(s,z)$ is the
same for all $z\in S$, say $x_{\ast}\in F$, and the integral curves of
$\partial_{s}$ are radial geodesic from $x_{\ast}$. Thus, $d_{F}(x_{\ast
},(s,z))=s-a$ and given $(t,x)\in L$ we have%
\[
d_{F}(x_{\ast},x)=\int_{t_{0}}^{t}\frac{1}{f(r)}dr-a=\int_{t_{\ast}}^{t}%
\frac{1}{f(r)}dr.
\]

By Proposition \ref{conoGRW}, $L$ is contained in the nullcone $C_{(t_{\ast
},x_{\ast})}^{+}$.

The converse is straightforward.
\end{proof}

\begin{theorem}
\label{teoremaintegralf} Any totally umbilic null hypersurface in a
Robertson-Walker space $I\times_{f}\mathbb{S}^{n-1}$ ($n>3$) with
\begin{equation}
\int_{I}\frac{1}{f(r)}dr>\pi\label{integraldef}%
\end{equation}
is an open set of a nullcone. In particular, it cannot exist totally geodesic
null hypersurfaces.
\end{theorem}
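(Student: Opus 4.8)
The plan is to combine Theorem \ref{umbilicGRW} with a rigidity statement for twisted decompositions of the round sphere, and then feed the result into Lemma \ref{lemacaractcono}. So let $L$ be a totally umbilic null hypersurface and fix $(t_0,x_0)\in L$. By Theorem \ref{umbilicGRW}, near $x_0$ the fibre $\mathbb{S}^{n-1}$ splits as a twisted product $(J\times S,\,ds^2+\mu(s,z)^2g_S)$ with $\dim S=n-2\ge 2$ (this is where $n>3$ enters), with $\mu(0,z)=1$, and with $L=\{(t,s,z):s=\int_{t_0}^{t}\frac{1}{f(r)}dr\}$. The goal is to show this decomposition can only be the geodesic polar decomposition of the sphere about some point $x_{\ast}$.

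For this I would first exploit that $\mathbb{S}^{n-1}$ has constant curvature $1$. By the computation recalled in Example \ref{productoesferas}, the sectional curvature of a plane containing $\partial_s$ equals $-\mu_{ss}/\mu$ (in a twisted product $\nabla_{\partial_s}\partial_s=0$, so $\mathrm{Hess}_{\mu}(\partial_s,\partial_s)=\mu_{ss}$); hence $\mu_{ss}+\mu=0$, and with $\mu(0,z)=1$ this gives $\mu(s,z)=\cos s+c(z)\sin s$ for some function $c$ on $S$. Next, the leaf $S_0=\{s=0\}$, carrying the induced metric $g_S$, is a hypersurface of $\mathbb{S}^{n-1}$ whose shape operator with respect to $\partial_s$ is $\tfrac{\mu_s(0,z)}{\mu(0,z)}\,\mathrm{Id}=c(z)\,\mathrm{Id}$, i.e. $S_0$ is totally umbilic. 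The Codazzi equation in a space form then yields $X(c)\,Y=Y(c)\,X$ for all $X,Y$ tangent to $S_0$, and since $\dim S_0=n-2\ge 2$ this forces $c\equiv c_0$ constant. Therefore $\mu(s,z)=\sqrt{1+c_0^{2}}\,\cos(s-s_0)$ with $s_0=\arctan c_0\in(-\tfrac{\pi}{2},\tfrac{\pi}{2})$; in particular the decomposition is actually a warped product, $S_0$ is a geodesic sphere of $\mathbb{S}^{n-1}$, and after the reparametrization $r=s-s_0+\tfrac{\pi}{2}$ the splitting is exactly the geodesic polar decomposition $dr^{2}+\sin^{2}r\,g_{\mathbb{S}^{n-2}}$ centred at a point $x_{\ast}$; it extends to all $s\in(s_0-\tfrac{\pi}{2},s_0+\tfrac{\pi}{2})$, with $\mu\to 0$ at both endpoints (the centre $x_{\ast}$ and its antipode).

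Finally I would extend $L$ to $\widehat L=\{(t,s,z):s=\int_{t_0}^{t}\frac{1}{f(r)}dr\}$ over the whole interval $s\in(s_0-\tfrac{\pi}{2},s_0+\tfrac{\pi}{2})$ and apply Lemma \ref{lemacaractcono} (legitimate since $\mathbb{S}^{n-1}$ is complete) with $a=s_0-\tfrac{\pi}{2}<0<s_0+\tfrac{\pi}{2}=b$. Condition $(1)$ of that lemma holds at either endpoint. Condition $(2)$ asks for $t_{\ast}\in I$ with $\int_{t_0}^{t_{\ast}}\frac{1}{f}=a$ in the future case, or $=b$ in the past case; since $t\mapsto\int_{t_0}^{t}\frac{1}{f}$ is an increasing diffeomorphism of $I$ onto an open interval of length $\int_I\frac{1}{f}>\pi$ and $b-a=\pi$, at least one of $a,b$ lies in that interval — if neither did, both inequalities would fail and adding them would give $\int_I\frac{1}{f}\le\pi$, a contradiction. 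Hence $\widehat L$, and therefore the open subset $L\subset\widehat L$, is an open set of a (future or past) nullcone. The last assertion of the theorem follows because an open set of a nullcone is never totally geodesic: by Remark \ref{conogeodesico} the null mean curvature of a nullcone does not vanish (it blows up at the vertex), so $B\equiv 0$ is impossible.

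The main obstacle is the rigidity step of the second paragraph — showing that the twisted decomposition of $\mathbb{S}^{n-1}$ produced by Theorem \ref{umbilicGRW} has no freedom beyond the geodesic polar one. The constant curvature condition pins down the $s$-dependence of $\mu$ immediately, but excluding a genuine dependence of the umbilicity factor $c$ on $S$ is exactly where the hypothesis $n>3$ is needed, through the Codazzi equation; for $n=3$ the leaf $S_0$ is a curve, $c$ need not be constant, and the statement indeed fails (every null surface is then totally umbilic).
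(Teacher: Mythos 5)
Your proposal is correct and follows the paper's strategy in outline (Theorem \ref{umbilicGRW} to get a twisted splitting of $\mathbb{S}^{n-1}$, identification of that splitting as the polar one, the length argument from $\int_I\frac{1}{f}>\pi$ versus $b-a=\pi$, Lemma \ref{lemacaractcono}, and Remark \ref{conogeodesico} for the last clause), but you handle the key rigidity step differently. The paper disposes of the twisted-to-warped reduction by citing \cite{GarciaRio}: since $\mathbb{S}^{n-1}$ is Einstein and the fibre $S$ has dimension $n-2\geq 2$, the twisted product is in fact a warped product, and then the explicit form $\mu(s)=\frac{\cos(s+\theta)}{\cos\theta}$ is read off from constant curvature. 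You instead prove this by hand: constant curvature $1$ on planes containing $\partial_s$ gives $\mu_{ss}+\mu=0$, hence $\mu(s,z)=\cos s+c(z)\sin s$ with $\mu(0,z)=1$, and the slice $S_0$ is umbilic with factor $c(z)$, so the Codazzi equation in a space form forces $c$ constant precisely when $\dim S_0=n-2\geq2$. So both arguments spend the hypothesis $n>3$ at the same place, yours through Codazzi, the paper's through the fibre-dimension hypothesis of the twisted-to-warped theorem; your version is self-contained and makes the role of $n>3$ (and its failure for $n=3$) transparent, while the paper's is shorter and reuses a quotable general result. The remaining steps match the paper: you are a bit more explicit than the paper about why the integral condition yields condition (2) of Lemma \ref{lemacaractcono} at one of the two endpoints, and you are comparably terse about extending the local warped region to the maximal interval $(s_0-\frac{\pi}{2},s_0+\frac{\pi}{2})$ before invoking the lemma (the paper's ``it is easy to show'' covers the same extension). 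Finally, your justification of the ``in particular'' clause is exactly the paper's appeal to Remark \ref{conogeodesico}; note only that, as in the paper, this remark literally concerns the blow-up of the null mean curvature at the vertex, so it rules out a totally geodesic piece of a cone near its vertex rather than an arbitrary open piece — you inherit this gloss from the paper rather than introduce it.
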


\begin{proof}
Let $L$ be a totally umbilic null hypersurface and take $(t_{0},x_{0})\in L$.
Using Theorem \ref{umbilicGRW}, $\mathbb{S}^{n-1}$ can be decomposed in a
neighborhood of $x_{0}$ as a twisted product. Since $\mathbb{S}^{n-1}$ is
Einstein, this decomposition is actually a warped product \cite{GarciaRio},
and it is easy to show that it is
\[
\left(  -\frac{\pi}{2}-\theta,\frac{\pi}{2}-\theta\right)  \times_{\mu
}\mathbb{S}^{n-2}(|\cos(\theta)|),
\]
where $\theta\in(-\frac{\pi}{2},\frac{\pi}{2})$, $\mu(s)=\frac{\cos(s+\theta
)}{\cos(\theta)}$ and $x_{0}$ is identified with $(0,z_{0})$ for some
$z_{0}\in\mathbb{S}^{n-2}$.

Using (\ref{integraldef}), it exists $t_{\ast}\in I$ such that $\int_{t_{0}%
}^{t_{\ast}}\frac{1}{f(r)}dr=\frac{\pi}{2}-\theta$ or $\int_{t_{0}}^{t_{\ast}%
}\frac{1}{f(r)}dr=-\frac{\pi}{2}-\theta$ and applying the above lemma, $L$ is
contained in a lightcone. The last claim follows from Remark
\ref{conogeodesico}.
\end{proof}

The condition (\ref{integraldef}) can not be sharpened. For example, in
$\mathbb{S}_{1}^{n}=\mathbb{R}\times_{\cosh(t)}\mathbb{S}^{n-1}$ there are
totally geodesic null hypersurfaces which evidently are not contained in a nullcone.

We can also get the following immediate corollaries.

\begin{corollary}
Nullcones are the unique totally umbilic null hypersurfaces in the closed
Friedmann Cosmological model.
\end{corollary}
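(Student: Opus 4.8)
The plan is to realize the closed Friedmann model as a Robertson–Walker space of the form $I\times_{f}\mathbb{S}^{n-1}$ with $n=4$ and then check that the hypothesis $\int_{I}\frac{1}{f(r)}\,dr>\pi$ of Theorem~\ref{teoremaintegralf} is satisfied, so that the corollary follows at once from that theorem together with Proposition~\ref{conosumbilicos}. Concretely, the standard closed Friedmann (dust-filled) universe has spatial slices isometric to $\mathbb{S}^{3}$ and scale factor $a(\tau)$ parametrized by the development angle $\eta\in(0,2\pi)$ via $a=\frac{a_{0}}{2}(1-\cos\eta)$, $\tau=\frac{a_{0}}{2}(\eta-\sin\eta)$, so that in conformal time $\eta$ the metric is $a(\eta)^{2}(-d\eta^{2}+g_{\mathbb{S}^{3}})$. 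The first step is therefore to rewrite this in GRW form: set $t=t(\eta)$ with $\frac{dt}{d\eta}=a(\eta)$ and $f(t)=a(\eta(t))$, so the metric becomes $-dt^{2}+f(t)^{2}g_{\mathbb{S}^{3}}$ on an interval $I=(0,T)$ where $T$ is the total proper lifetime (the value of $\tau$ at $\eta=2\pi$).

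Second, I would compute $\int_{I}\frac{1}{f}\,dt$ by changing variables back to $\eta$: since $dt=a\,d\eta$ and $f=a$, one gets $\int_{I}\frac{1}{f(t)}\,dt=\int_{0}^{2\pi}\frac{a(\eta)}{a(\eta)}\,d\eta=\int_{0}^{2\pi}d\eta=2\pi$. Thus $\int_{I}\frac{1}{f}=2\pi>\pi$, so the hypothesis \eqref{integraldef} of Theorem~\ref{teoremaintegralf} holds (and, since the fibre is $\mathbb{S}^{3}$, we are in dimension $n=4>3$ as required). Consequently every totally umbilic null hypersurface of the closed Friedmann model is an open subset of a nullcone. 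Conversely, by Proposition~\ref{conosumbilicos} every local nullcone in a Robertson–Walker space is totally umbilic, so nullcones are exactly the totally umbilic null hypersurfaces, proving the corollary.

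The main point to be careful about is the global identification of the closed Friedmann spacetime with a GRW space $I\times_{f}\mathbb{S}^{3}$ over the \emph{maximal} time interval, i.e.\ that the conformal-time parameter genuinely runs over the full range $(0,2\pi)$ from the initial to the final singularity; this is where the value $2\pi$ (rather than something smaller) comes from, and it is exactly what makes \eqref{integraldef} hold. The light-cone computation in Proposition~\ref{conoGRW} shows that in conformal time a radial null geodesic from one pole sweeps out an angular distance equal to the elapsed conformal time, so it can traverse the whole $\mathbb{S}^{3}$ (angular diameter $\pi$) and return, which is the geometric content of $\int_{I}\frac1f>\pi$. Beyond this identification there is nothing further to prove: the substantive work has already been done in Theorem~\ref{umbilicGRW}, Theorem~\ref{teoremaintegralf} and Proposition~\ref{conosumbilicos}.
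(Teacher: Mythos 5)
Your proposal is correct and is essentially the argument the paper has in mind: the corollary is stated as an immediate consequence of Theorem \ref{teoremaintegralf} together with Proposition \ref{conosumbilicos}, and the only content is precisely your verification that the closed (dust) Friedmann model is a Robertson--Walker space $I\times_{f}\mathbb{S}^{3}$ whose conformal lifetime $\int_{I}\frac{1}{f}\,dt=2\pi$ exceeds $\pi$. Your explicit cycloid computation just makes that hypothesis check visible, so there is nothing to add.
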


\begin{corollary}
\label{corpd} Any totally umbilic null hypersurface in $\mathbb{R}\times\mathbb{S}%
^{n-1}$ ($n>3$) is contained in a nullcone.
\end{corollary}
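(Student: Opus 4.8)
The plan is to specialize Theorem \ref{teoremaintegralf} directly. The manifold $\mathbb{R}\times\mathbb{S}^{n-1}$ is precisely the Robertson-Walker space $I\times_{f}\mathbb{S}^{n-1}$ with base interval $I=\mathbb{R}$ and warping function $f\equiv 1$. For this choice one computes
\[
\int_{I}\frac{1}{f(r)}\,dr=\int_{\mathbb{R}}1\,dr=+\infty,
\]
which in particular satisfies the hypothesis $\int_{I}\frac{1}{f(r)}\,dr>\pi$ of Theorem \ref{teoremaintegralf}. Hence, for $n>3$, every totally umbilic null hypersurface of $\mathbb{R}\times\mathbb{S}^{n-1}$ is an open set of a nullcone, which is the claim.

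There is essentially no obstacle here: the statement is a verbatim instance of the previous theorem. The only points worth checking are that the standing assumptions carry over, namely that $\mathbb{S}^{n-1}$ is complete (so that Lemma \ref{lemacaractcono}, invoked inside the proof of Theorem \ref{teoremaintegralf}, applies) and that the dimension restriction $n>3$ is kept, both of which hold by hypothesis. One could additionally remark, as in Theorem \ref{teoremaintegralf}, that consequently no totally geodesic null hypersurface exists in $\mathbb{R}\times\mathbb{S}^{n-1}$ by Remark \ref{conogeodesico}, but this is not needed for the stated corollary.
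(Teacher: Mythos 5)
Your proof is correct and is exactly how the paper obtains this corollary: $\mathbb{R}\times\mathbb{S}^{n-1}$ is the Robertson--Walker space $I\times_{f}\mathbb{S}^{n-1}$ with $I=\mathbb{R}$ and $f\equiv1$, so $\int_{I}\frac{1}{f}=\infty>\pi$ and Theorem \ref{teoremaintegralf} applies verbatim. Nothing further is needed.
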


Recall that both, Friedmann models and $\mathbb{R}\times\mathbb{S}^{n-1}$, can not
possess totally geodesic null hypersurfaces due to Equation (\ref{Eq1}) and
Lemma 5.2 of \cite{GutOle09}.

In \cite{Akivis} it is shown that totally umbilic null hypersurfaces in a
Lorentzian manifold of constant curvature are contained in nullcones. The
proof is based on their following claim: in a Lorentzian manifold any totally
umbilic null hypersurfaces with zero null sectional curvature is contained in
a nullcone. But the example below shows that this is not true in general.

\begin{example}
\label{Ej1}Let $Q\times_{r}\mathbb{S}^{2}$ be the Kruskal spacetime, \cite{O}.
The hypersurface $L_{u_{0}}=\{(u,v,x)\in Q\times\mathbb{S}^{2}:u=u_{0}\}$ is
totally umbilic and null. Moreover, if $\Pi$ is a null tangent plane to
$L_{u_{0}}$, then it is spanned by $\partial_{v}$ and $w\in T\mathbb{S}^{2}$,
so
\[
\mathcal{K}_{\partial_{v}}(\Pi)=-\frac{Hess_{r}(\partial_{v},\partial_{v})}%
{r}=0,
\]
but $L_{u_{0}}$ is not contained in a nullcone.
\end{example}

However, under completeness hypothesis it seems that the above claim is true. For clarity, we give an alternative proof of the following
result using the technique presented in this paper.

\begin{theorem}
\label{Teor1}Any totally umbilic null hypersurface in a complete space of
constant curvature and dimension greater than three is totally geodesic or is
contained in a nullcone.
\end{theorem}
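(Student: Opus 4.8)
I would first pass to the universal Lorentzian covering, which is complete and simply connected of constant curvature, hence (after rescaling the metric so that the curvature is $0$, $1$ or $-1$) equal to $\mathbb{R}^n_1$, $\mathbb{S}^n_1$, or the universal cover of anti-de Sitter space. The lift of $L$ is again a totally umbilic null hypersurface; being totally geodesic is a local property, and the covering projection carries (pieces of) nullcones to (pieces of) nullcones because it intertwines with the exponential maps, so it suffices to prove the statement for these three models. Fix $p_0=(t_0,x_0)\in L$.

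\textbf{Transporting to the Einstein static universe.} Each of the three models is globally conformal to an open subset $\Omega$ of $\mathbb{R}\times\mathbb{S}^{n-1}$: an open causal diamond for $\mathbb{R}^n_1$, the slab $\left(-\frac{\pi}{2},\frac{\pi}{2}\right)\times\mathbb{S}^{n-1}$ for $\mathbb{S}^n_1$ (via the Gudermannian $\tau$ with $\tan\tau=\sinh t$), and $\mathbb{R}$ times an open hemisphere for anti-de Sitter. The totally umbilic condition is conformally invariant: if $\widehat g=e^{2\omega}g$, then using $g(\xi,\cdot)|_{TL}=0$ one computes $\widehat B=(\rho-\xi(\omega))\,\widehat g$, so $B=\rho g$ forces $\widehat B=\widehat\rho\,\widehat g$. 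Hence the image $\overline L\subset\Omega$ of $L$ is a totally umbilic null hypersurface of $\mathbb{R}\times\mathbb{S}^{n-1}$, and since $n>3$ Corollary \ref{corpd} gives that $\overline L$ is an open subset of some nullcone $C_{\overline p}$; identifying $L$ with $\overline L$, we obtain that $L$ is an open subset of $C_{\overline p}\cap\Omega$.

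\textbf{The dichotomy.} If $\overline p\in\Omega$, then, null geodesics being a conformal invariant, the generators of $C_{\overline p}$ meeting $\Omega$ coincide as point sets with the null geodesics of $M$ issuing from the point of $M$ corresponding to $\overline p$, so $C_{\overline p}\cap\Omega$ is locally a nullcone of $M$ and $L$ is an open subset of a nullcone of $M$. If $\overline p\notin\Omega$, I claim $L$ is totally geodesic. Since $M$ has constant curvature, $R(X,\xi)\xi=0$ for every null plane $\mathrm{span}(X,\xi)$ tangent to $L$ (as $\xi$ is null and $g(X,\xi)=0$), so the null sectional curvature of $L$ vanishes and Equation (\ref{Eq1}) reduces to the Riccati equation $\xi(\rho)=\rho^2$ along each null generator. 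Parametrizing a maximal generator $\gamma$ of $L$ by an affine parameter of $M$ so that it is an integral curve of $\xi$, the function $\rho\circ\gamma$ solves $u'=u^2$; if it is not identically zero it equals $-1/(s-s_0)$ with $s_0$ finite, which forces $\gamma$ to focus as $s\to s_0$, and the focal point — the locus where $\rho\to\infty$, i.e.\ where the generators of $C_{\overline p}$ meet — is $\overline p$. But $\overline p\notin\Omega$ while $M$ is complete, so $\gamma$ would be inextendible in $M$ before parameter $s_0$, contradicting completeness; hence $\rho\circ\gamma\equiv0$ on every generator, $B\equiv0$, and $L$ is totally geodesic.

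\textbf{Where the work is.} The delicate point is exactly the last step: following the generators of $C_{\overline p}\cap\Omega$ after they leave $\Omega$ and certifying that the blow-up of $\rho$ locates the vertex $\overline p$, and that $\overline p\in M$ in the conical case. Completeness is genuinely indispensable here, since Example \ref{Ej1} shows a totally umbilic null hypersurface (in the incomplete Kruskal spacetime) with vanishing null sectional curvature that is neither totally geodesic nor contained in a nullcone. For the two globally Robertson-Walker models ($k\ge0$) this step can be replaced by the direct machinery of the paper: apply Theorem \ref{umbilicGRW} in a Robertson-Walker chart at $p_0$; the fibre is Einstein of dimension $n-1\ge3$, so by \cite{GarciaRio} its twisted decomposition is a warped one with $\mu''+k\mu=0$, and Lemma \ref{lemacaractcono} then identifies $L$ as an open subset of a nullcone precisely when $\mu$ vanishes at an endpoint of its interval of positivity — its range hypothesis $\int_I\frac{1}{f}$ being infinite for $\mathbb{R}^n_1$ and equal to $\pi$ for de Sitter, which is exactly the length of that interval in the de Sitter case and explains why de Sitter is the borderline admitting totally geodesic examples.
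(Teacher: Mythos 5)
Your first two steps are fine and genuinely different from the paper's route: conformal invariance of total umbilicity is exactly Lemma \ref{cambioconforme}, and Corollary \ref{corpd} (whose proof is local at a point of the hypersurface, so it does apply to $\overline{L}$ even though it lives only in the open set $\Omega$) places $\overline{L}$ inside a nullcone of the Einstein cylinder. The paper instead never leaves the model space: it applies Theorem \ref{umbilicGRW} together with \cite{GarciaRio} to list the possible warped decompositions of the fibre ($\mathbb{R}^{n-1}$, $\mathbb{S}^{n-1}$, $\mathbb{H}^{n-1}$) and then identifies each resulting hypersurface either through Lemma \ref{lemacaractcono} or, for the two non-conical decompositions of $\mathbb{H}^{n-1}$, through explicit isometric embeddings into $\mathbb{R}_{2}^{n+1}$ exhibiting the cone (or the totally geodesic hypersurface of Table \ref{tabla1}). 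If your third step were complete, your argument would be shorter and more uniform; but it is not.

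The gap is the branch $\overline{p}\notin\Omega$. There you invoke the Riccati equation $\xi(\rho)=\rho^{2}$ and claim that a nonzero $\rho$ blows up at a finite affine parameter $s_{0}$ whose corresponding point ``is $\overline{p}$'', contradicting completeness. But $\rho$ is defined only on $L$, which is an arbitrary open piece of $C_{\overline{p}}\cap\Omega$ and need not extend along its generators anywhere near $s_{0}$; the formal singularity of the solution $u(s)=1/(s_{0}-s)$ then lies outside the domain of $\rho$ and yields no contradiction by itself. What is missing is precisely the assertion that the umbilic factor of $L$ computed in $M$ locates the vertex of the cylinder cone; equivalently, that $L$ coincides near the given point with the nullcone of $M$ at $\gamma(s_{0})$ (a point which does exist in $M$ by completeness), or, after Lemma \ref{cambioconforme}, a computation of $\rho_{M}=\rho_{\mathrm{cyl}}+\xi(\ln\phi)$ with the explicit conformal factor of each model showing that $\rho_{M}\equiv0$ exactly when no admissible vertex lies in $\Omega$ (note the cylinder cone containing $\overline{L}$ is not unique, since its generators refocus, so the dichotomy must be phrased over all admissible vertices). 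This is essentially the Akivis--Goldberg claim \cite{Akivis} that the paper deliberately avoids: Example \ref{Ej1} shows it fails without completeness, and the paper only remarks that it ``seems'' true with it, which is why it gives the alternative proof it does. Your closing fallback repairs only the $\mathbb{R}_{1}^{n}$ and $\mathbb{S}_{1}^{n}$ cases via Theorem \ref{umbilicGRW} and Lemma \ref{lemacaractcono}; the anti-de Sitter case, where the paper's proof does its heaviest work, is exactly the one your argument leaves open.
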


\begin{proof}
We can suppose that $M$ is $\mathbb{R}_{1}^{n}$, $\mathbb{S}_{1}^{n}$ or
$\mathbb{H}_{1}^{n}$.

Suppose first that $M=\mathbb{R}_{1}^{n}=\mathbb{R}\times\mathbb{R}^{n-1}$,
and $L$ is a totally umbilic and non totally geodesic null hypersurface in
$M$. From Theorem \ref{umbilicGRW} and \cite{GarciaRio}, it induces a
decomposition of $\mathbb{R}^{n-1}$ as a warped product, but since $L$ is not
totally geodesic, the only possible decomposition is $\left(  -\frac{1}%
{\theta},\infty\right)  \times_{\theta s+1}\mathbb{S}^{n-2}\left(  \frac
{1}{\theta}\right)  $ for $\theta>0$ or $\left(  -\infty,-\frac{1}{\theta
}\right)  \times_{\theta s+1}\mathbb{S}^{n-2}\left(  -\frac{1}{\theta}\right)
$ for $\theta<0$. Applying Lemma \ref{lemacaractcono}, $L$ is contained in a nullcone.

Suppose now that $M=\mathbb{S}_{1}^{n}=\mathbb{R}\times_{\cosh(t)}%
\mathbb{S}^{n-1}$. Without loss of generality, we can suppose that
$(0,x_{0})\in L$ for some $x_{0}\in\mathbb{S}^{n-1}$. As in the proof of
Theorem \ref{teoremaintegralf}, there is a decomposition of $\mathbb{S}^{n-1}$
as
\[
\left(  -\frac{\pi}{2}-\theta,\frac{\pi}{2}-\theta\right)  \times_{\mu
}\mathbb{S}^{n-2}(|\cos(\theta)|),
\]
where $\theta\in(-\frac{\pi}{2},\frac{\pi}{2})$ and $\mu(s)=\frac
{\cos(s+\theta)}{\cos(\theta)}$. If $\theta=0$, then $L$ is totally geodesic
(see Table \ref{tabla1}). If $\theta\neq0$, using Lemma \ref{lemacaractcono},
$L$ is contained in a nullcone.

Finally, we consider $M=\mathbb{H}_{1}^{n}\subset\mathbb{R}_{2}^{n+1}$. Since
it does not admit a global decomposition as a RW space, a little more work
must be done in this case. We can suppose that $L$ intersects an open set of
$\mathbb{H}_{1}^{n}$ isometric to $\left(  -\frac{\pi}{2},\frac{\pi}%
{2}\right)  \times_{\cos t}\mathbb{H}^{n-1}$ and $(0,x_{0})\in L$ for some
$x_{0}\in\mathbb{H}^{n-1}$. Applying Theorem \ref{umbilicGRW} and
\cite{GarciaRio}, there is a decomposition of $\mathbb{H}^{n-1}$ as a warped
product $J\times_{\mu}S$ in a neighborhood of $x_{0}$ and $L$ is given by%
\[
\{(2\arg\tan\left(  \tanh\frac{s}{2}\right)  ,s,z):s\in J,z\in S\}.
\]

The decomposition of $\mathbb{H}^{n-1}$ can be of three different types.

\begin{itemize}
\item $J\times_{\frac{\sinh(s+\theta)}{\sinh\theta}}\mathbb{S}^{n-2}\left(
|\sinh\theta|\right)  $ where $J=\left(  -\theta,\infty\right)  $ if
$\theta>0$ and $J=\left(  -\infty,-\theta\right)  $ if $\theta<0$. In this
case we can apply Lemma \ref{lemacaractcono}.

\item $\mathbb{R}\times_{\frac{\cosh(s+\theta)}{\cosh\theta}}\mathbb{H}%
^{n-2}(\cosh\theta)$ where $\theta\in\mathbb{R}$. If $\theta=0$, then $L$ is
totally geodesic (see Table \ref{tabla1}), so we suppose $\theta\neq0$. The
map
\[
\Phi:\left(  -\frac{\pi}{2},\frac{\pi}{2}\right)  \times_{\cos t}\left(
\mathbb{R}\times_{\frac{\cosh(s+\theta)}{\cosh\theta}}\mathbb{H}^{n-2}%
(\cosh\theta)\right)  \rightarrow\mathbb{R}_{2}^{n+1}%
\]
given by%
\[
\Phi(t,s,z)=\left(  \cos t\sinh(s+\theta),\frac{\cos t\cosh(s+\theta)}%
{\cosh\theta}z,\sin t\right)
\]
is an isometric embedding into $\mathbb{H}_{1}^{n}$. If we call $t=2\arg
\tan\left(  \tanh\frac{s}{2}\right)  $, then it holds $\cos t\cosh s=1$ and
$\sin t=\tanh s$ and it is easy to show that
\begin{align*}
&  \Phi(L)=\\
&  \left\{  (\sinh\theta+\cosh\theta\tanh s,(1+\tanh\theta\tanh s)z,\tanh
s):s\in\mathbb{R},z\in\mathbb{H}^{n-2}\right\}
\end{align*}
is contained in the nullcone of $\mathbb{H}_{1}^{n}$ at $\left(  -\frac
{1}{\sinh\theta},0,\ldots,0,-\frac{1}{\tanh\theta}\right)  $.

\item $\mathbb{R}\times_{e^{s}}\mathbb{R}^{n-2}$. The map
\[
\Psi:\left(  -\frac{\pi}{2},\frac{\pi}{2}\right)  \times_{\cos t}\left(
\mathbb{R}\times_{e^{s}}\mathbb{R}^{n-2}\right)  \rightarrow\mathbb{R}%
_{2}^{n+1}%
\]
given by
\begin{align*}
&  \Psi(t,s,z)\\
&  =\left(  e^{s}\cos t\cdot z,\frac{\cos t\left(  e^{s}\left(  1-|z|^{2}%
\right)  -e^{-s}\right)  }{2},\frac{\cos t\left(  e^{s}\left(  1+|z|^{2}%
\right)  +e^{-s}\right)  }{2},\sin t\right)
\end{align*}
is an isometric embedding. As before, we have
\begin{align*}
&  \Psi(L)\\
&  =\left\{  \left(  \frac{e^{s}\cdot z}{\cosh s},\tanh s-\frac{e^{s}|z|^{2}%
}{2\cosh s},1+\frac{e^{s}|z|^{2}}{2\cosh s},\tanh s\right)  :s\in
\mathbb{R},z\in\mathbb{R}^{n-2}\right\}  ,
\end{align*}
which is contained in the nullcone of $\mathbb{H}_{1}^{n}$ at $\left(
0,\ldots,0,-1,1,-1\right)  $.
\end{itemize}
\end{proof}

\begin{example}
Since $\mathbb{H}^{n-1}$ qualitatively admits different decompositions, we can
show that an analogous result of Corollary \ref{corpd} replacing
$\mathbb{S}^{n-1}$ with $\mathbb{H}^{n-1}$ is not true. In fact, from Theorem
\ref{umbilicGRW} the null hypersurface given by $L=\{(s,s,z)\}$ in
$\mathbb{R}\times\left(  \mathbb{R}\times_{e^{s}}\mathbb{R}^{n-2}\right)
\subset\mathbb{R}\times\mathbb{H}^{n-1}$ is totally umbilic but it is
evidently not contained in a nullcone.
\end{example}

\section{Standard static spaces}

\label{seccionestatico}

Given $I\subset\mathbb{R}$, $\left(  F,g_{F}\right)  $ a Riemannian manifold
and $\phi\in C^{\infty}(F)$ a positive function, the manifold $F\times I$
furnished with the Lorentzian metric $g^{\ast}=g_{F}-\phi^{2}dt^{2}$ is called
a standard static space and is denoted by $F\times_{\phi}I$. Totally umbilic
null hypersurfaces are preserved if we apply a conformal transformation to get
a GRW space with constant warping function. In general, we have the following.

\begin{lemma}
\label{cambioconforme} Let $(M,g^{\ast})$ be a Lorentzian manifold with $\dim
M=n$, $\phi\in C^{\infty}(M)$ a positive function and $g=\frac{1}{\phi^{2}%
}g^{\ast}$. If $L$ is a null hypersurface in $(M,g^{\ast})$ and $B^{\ast}$ its
null second fundamental form respect to a fixed null vector field $\xi
\in\mathfrak{X}(L)$, then $L$ is a null hypersurface in $(M,g)$ with null second
fundamental form $B$ respect to $\xi$ given by
\[
B=\frac{1}{\phi^{2}}\left(  B^{\ast}+\xi\left(  \ln\phi\right)  g^{\ast
}\right)  .
\]

In particular, if $L$ is totally umbilic in $(M,g^{\ast})$ with null mean
curvature $H^{\ast}$, then it is also totally umbilic in $(M,g)$ with null
mean curvature $H=H^{\ast}+(n-2)\xi(\ln\phi)$.
\end{lemma}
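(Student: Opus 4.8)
The plan is to compute directly how the null second fundamental form transforms under the conformal change $g = \frac{1}{\phi^2} g^{\ast}$, using the standard formula for the Levi-Civita connection under a conformal rescaling. First I would observe that since $g$ and $g^{\ast}$ are conformally related, the same vector $\xi$ that is null for $g^{\ast}$ along $L$ is null for $g$, and moreover the screen distribution $\mathcal{S} = \xi^{\perp} \cap TL$ is the same for both metrics (orthogonality of a vector to $\xi$ is conformally invariant, since $g(X,\xi) = \frac{1}{\phi^2} g^{\ast}(X,\xi)$). So the definitions $B^{\ast}(X,Y) = -g^{\ast}(\nabla^{\ast}_X \xi, Y)$ and $B(X,Y) = -g(\nabla_X \xi, Y)$ make sense for the same $X, Y \in \mathcal{S}$.

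The key step is to insert the conformal connection formula. Writing $\omega = \ln\phi$ (so that $g = e^{-2\omega} g^{\ast}$), the standard identity gives, for any vector fields $X, Y$,
\[
\nabla_X Y = \nabla^{\ast}_X Y - X(\omega) Y - Y(\omega) X + g^{\ast}(X,Y)\,\mathrm{grad}^{\ast}\omega .
\]
I would apply this with $Y = \xi$ and then pair against $Y \in \mathcal{S}$ using $g$. The term $-X(\omega)\xi$ contributes nothing since $g(\xi, Y) = 0$ for $Y \in \mathcal{S}$; the term $g^{\ast}(X,\xi)\,\mathrm{grad}^{\ast}\omega$ also contributes nothing since $g^{\ast}(X,\xi) = 0$ for $X \in \mathcal{S}$; and the surviving term is $-\xi(\omega) X$, which contributes $-\xi(\omega) g(X,Y) = -\frac{1}{\phi^2}\xi(\omega) g^{\ast}(X,Y)$. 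Together with $-g(\nabla^{\ast}_X \xi, Y) = -\frac{1}{\phi^2} g^{\ast}(\nabla^{\ast}_X \xi, Y) = \frac{1}{\phi^2} B^{\ast}(X,Y)$, this yields exactly
\[
B(X,Y) = \frac{1}{\phi^2}\left( B^{\ast}(X,Y) + \xi(\ln\phi)\, g^{\ast}(X,Y) \right).
\]

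For the final assertion about the mean curvature, I would take an orthonormal basis $\{e_3, \dots, e_n\}$ of $\mathcal{S}_p$ with respect to $g$; then $\{\phi e_3, \dots, \phi e_n\}$ is $g^{\ast}$-orthonormal, so $H^{\ast} = \sum_i B^{\ast}(\phi e_i, \phi e_i) = \phi^2 \sum_i B^{\ast}(e_i, e_i)$, and tracing the displayed transformation formula over the $g$-orthonormal basis gives $H = \sum_i B(e_i, e_i) = \frac{1}{\phi^2}(\phi^2 \sum_i B^{\ast}(e_i,e_i)\cdot\phi^2 \cdot \phi^{-2} + (n-2)\xi(\ln\phi)) $; more cleanly, $H = \sum_i B(e_i,e_i) = \frac{1}{\phi^2}\big(H^{\ast}/\phi^2 \cdot \phi^2 + \cdots\big)$ — I would be careful here and instead just note $\sum_i B^{\ast}(e_i,e_i) = H^{\ast}/\phi^2$ only if the basis is $g^{\ast}$-orthonormal, so the tidy route is: $\mathrm{tr}_{g}$ of $B$ equals $\mathrm{tr}_{g}$ of $\frac{1}{\phi^2}(B^{\ast} + \xi(\ln\phi)g^{\ast})$, and since $\mathrm{tr}_g(\tfrac{1}{\phi^2}g^{\ast}) = \mathrm{tr}_g g = n-2$ while $\mathrm{tr}_g(\tfrac{1}{\phi^2}B^{\ast}) = \mathrm{tr}_{g^{\ast}} B^{\ast} = H^{\ast}$, we get $H = H^{\ast} + (n-2)\xi(\ln\phi)$. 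The computation is entirely routine; the only mild obstacle is bookkeeping the null directions correctly — in particular remembering that $\xi \notin \mathcal{S}$, so one must not carelessly trace over $\xi$, and that the trace of a bilinear form restricted to $\mathcal{S}$ is taken with respect to the (degenerate on $TL$, but nondegenerate on $\mathcal{S}$) induced metric, which rescales consistently under the conformal change.
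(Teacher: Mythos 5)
Your proof is correct and follows essentially the same route as the paper, whose entire proof is to invoke the conformal-change formula for the Levi-Civita connection (written there as $\nabla_{U}V=\nabla^{\ast}_{U}V+\frac{1}{\phi}\left(g^{\ast}(U,V)\nabla^{\ast}\phi-U(\phi)V-V(\phi)U\right)$, i.e.\ your formula with $\omega=\ln\phi$) and then trace over the screen. The only blemish is the passing claim that $\{\phi e_{3},\dots,\phi e_{n}\}$ is $g^{\ast}$-orthonormal (it should be $\{e_{3}/\phi,\dots,e_{n}/\phi\}$, since $g^{\ast}=\phi^{2}g$), but the clean identities you settle on, $\mathrm{tr}_{g}\left(\tfrac{1}{\phi^{2}}B^{\ast}\right)=\mathrm{tr}_{g^{\ast}}B^{\ast}=H^{\ast}$ and $\mathrm{tr}_{g}\left(\tfrac{1}{\phi^{2}}g^{\ast}\right)=n-2$, are right, so the conclusion stands.
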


\begin{proof}
Just use that $\nabla_{U}V=\nabla^{\ast}_{U}V+\frac{1}{\phi}\left(  g^{\ast
}(U,V)\nabla^{\ast} \phi-U(\phi)V-V(\phi)U\right)  $.
\end{proof}

\begin{theorem}
\label{estatico} Let $F\times_{\phi}I$ be a $n$-dimensional standard static
space. If $L$ is a totally umbilic null hypersurface, then for each
$(x_{0},t_{0})\in L$ there exists a local decomposition of $\left(  F,\frac
{1}{\phi^{2}}g_{F}\right)  $ in a neighborhood of $x_{0}$ as a twisted product
with one dimensional base
\[
\left(  J\times S,ds^{2}+\mu(s,z)^{2}g_{S}\right)  ,
\]
where $x_{0}$ is identified with $(0,z_{0})$ for some $z_{0}\in S$ and $L$ is
given by%
\[
\{(s,z,s+t_{0})\in J\times S\times I\}.
\]

Moreover, if $H^{\ast}$ is the null mean curvature of $L$, then
\[
\mu(s,z)=\frac{\phi(0,z)}{\phi(s,z)}\exp\left(  \int_{0}^{s}\frac{H^{\ast
}(r,z,r+t_{0})}{n-2}dr\right)  .
\]

Conversely, if $\left(  F,\frac{1}{\phi^{2}}g_{F}\right)  $ admits a twisted
decomposition in a neighborhood of $x_{0}$ as above, then%
\[
L=\{(s,z,s+t_{0})\in J\times S\times I\}
\]
is a totally umbilic null hypersurface with null mean curvature
\begin{equation}
H^{\ast}=(n-2)\frac{d}{ds}\ln\left(  \mu\phi\right)
.\label{curvaturamediaestatico1}%
\end{equation}

\end{theorem}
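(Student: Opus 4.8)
The plan is to reduce the whole statement to Theorem \ref{umbilicGRW} by means of the conformal change of Lemma \ref{cambioconforme}. Put $g=\frac{1}{\phi^{2}}g^{\ast}$, where $g^{\ast}=g_{F}-\phi^{2}dt^{2}$. Then $g=\frac{1}{\phi^{2}}g_{F}-dt^{2}$, so $(F\times_{\phi}I,g)$ is, after interchanging the two factors, the GRW space $I\times_{1}\overline{F}$ with constant warping function $f\equiv 1$ and fibre $\overline{F}=(F,\frac{1}{\phi^{2}}g_{F})$; its timelike closed conformal field is $\zeta=\partial_{t}$.

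For the direct implication, let $L$ be totally umbilic null in $(M,g^{\ast})$ with null mean curvature $H^{\ast}$ with respect to the fixed null vector field $\xi\in\mathfrak{X}(L)$. By Lemma \ref{cambioconforme}, $L$ is totally umbilic null in $(M,g)$, with null mean curvature $H=H^{\ast}+(n-2)\,\xi(\ln\phi)$. Applying Theorem \ref{umbilicGRW} to $I\times_{1}\overline{F}$ at the point $(t_{0},x_{0})$ yields a twisted decomposition $(J\times S,ds^{2}+\mu(s,z)^{2}g_{S})$ of $\overline{F}=(F,\frac{1}{\phi^{2}}g_{F})$ around $x_{0}$, with $x_{0}$ identified with $(0,z_{0})$ and $L=\{(t,s,z):s=\int_{t_{0}}^{t}\frac{1}{f(r)}dr\}=\{(t,s,z):s=t-t_{0}\}$, which in the static ordering is $\{(s,z,s+t_{0})\}$; this gives the claimed decomposition and description of $L$. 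For the formula of $\mu$, I substitute $f\equiv 1$ in the $\mu$-formula of Theorem \ref{umbilicGRW}, obtaining $\mu(s,z)=\exp\big(\int_{0}^{s}\tfrac{H}{n-2}\,dr\big)$ along $L$, and then replace $H$ by $H^{\ast}+(n-2)\xi(\ln\phi)$. It remains to evaluate $\xi(\ln\phi)$: by Lemma \ref{lemaB} with $f\equiv 1$ one has $\xi=-\partial_{t}-\partial_{s}$ in the decomposed coordinates and, since $\phi$ does not depend on $t$, $\xi(\ln\phi)=-\partial_{s}\ln\phi$; the factor $\exp\big(-\int_{0}^{s}\partial_{r}\ln\phi(r,z)\,dr\big)=\frac{\phi(0,z)}{\phi(s,z)}$ then splits off, producing exactly the stated expression for $\mu$.

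For the converse, from a twisted decomposition $(J\times S,ds^{2}+\mu^{2}g_{S})$ of $(F,\frac{1}{\phi^{2}}g_{F})$ around $x_{0}$, the converse part of Theorem \ref{umbilicGRW} shows that $L=\{(t,s,z):s=t-t_{0}\}=\{(s,z,s+t_{0})\}$ is totally umbilic null in $(M,g)$ with $H=\frac{n-2}{f(t)^{2}}\big(f'(t)+\tfrac{\mu_{s}}{\mu}\big)=(n-2)\tfrac{\mu_{s}}{\mu}$. Now I use Lemma \ref{cambioconforme} in reverse: since $g^{\ast}=\frac{1}{(1/\phi)^{2}}g$, the lemma applied with $1/\phi$ in place of $\phi$ gives that $L$ is totally umbilic null in $(M,g^{\ast})$ with $H^{\ast}=H+(n-2)\xi(\ln(1/\phi))=H-(n-2)\xi(\ln\phi)=(n-2)\big(\tfrac{\mu_{s}}{\mu}+\tfrac{\phi_{s}}{\phi}\big)=(n-2)\tfrac{d}{ds}\ln(\mu\phi)$, which is Equation (\ref{curvaturamediaestatico1}).

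The main obstacle is bookkeeping rather than conceptual: one must keep the reordering of the two factors straight (the static ordering $(x,t)$ versus the GRW ordering $(t,x)$), carry the conformal correction $(n-2)\xi(\ln\phi)$ consistently in both directions, and check that the fixed null field $\xi=-\partial_{t}-\partial_{s}$ coming out of Theorem \ref{umbilicGRW} is exactly the one for which Lemma \ref{cambioconforme} relates $H$ and $H^{\ast}$. Once these identifications are fixed, all the formulas follow by inserting $f\equiv 1$ into Theorem \ref{umbilicGRW} and invoking Lemma \ref{cambioconforme}.
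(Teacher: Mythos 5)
Your proposal is correct and follows exactly the paper's route: pass to the conformal metric $g=\frac{1}{\phi^{2}}g^{\ast}$, which turns the static space into a GRW space with $f\equiv 1$ and fibre $\left(F,\frac{1}{\phi^{2}}g_{F}\right)$, then combine Lemma \ref{cambioconforme} with Theorem \ref{umbilicGRW}, using $\xi(\ln\phi)=-\partial_{s}(\ln\phi)$ from Equation (\ref{campoxi}). You simply spell out the bookkeeping (factor ordering, the substitution $f\equiv1$, and the conformal correction in both directions) that the paper's two-line proof leaves implicit.
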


\begin{proof}
Take the conformal metric $g=\frac{1}{\phi^{2}}g^{\ast}$ and apply the above
lemma and Theorem \ref{umbilicGRW}. For this, take into account that from
Equation (\ref{campoxi}), $\xi\left(  \ln\phi\right)  =-\partial_{s}\left(
\ln\phi\right)  $.
\end{proof}

\begin{remark}
\label{hipersuperficiedualestatico} As in Corollary \ref{hipersuperficiedual},
if $L$ is a totally umbilic null hypersurface in a standard static space, we
can construct another totally umbilic null hypersurface through each point
$(x_{0},t_{0})\in L$ which we call $\widetilde{L}_{(x_{0},t_{0})}$, the dual
of $L$ through $(x_{0},t_{0})$. In fact, if $L$ induces a twisted
decomposition of $\left(  F,\frac{1}{\phi^{2}}g_{F}\right)  $ in a
neighborhood of $x_{0}$ where $L$ is given by $\{(s,z,s+t_{0})\}$, then
$\widetilde{L}_{(x_{0},t_{0})}$ is given by $\{(s,z,-s+t_{0})\}$. From
Corollary \ref{hipersuperficiedual} and Lemma \ref{cambioconforme}, its null
mean curvature is
\begin{equation}
\widetilde{H}^{\ast}=(n-2)\frac{d}{ds}\ln\left(  \frac{\phi}{\mu}\right)  .
\label{curvaturamediaestatico2}%
\end{equation}

\end{remark}

Now, we consider the family of standard static spacetimes given by
\begin{equation}
\left(  I\times\mathbb{S}^{2}\times\mathbb{R},\frac{1}{h(r)}dr^{2}+r^{2}%
g_{0}-h(r)dt^{2}\right)  , \label{familiaestaticos}%
\end{equation}
where $I\subset\mathbb{R}$, $h\in C^{\infty}(I)$ is a positive function and
$g_{0}$ is the canonical metric on $\mathbb{S}^{2}$. This family includes
important examples of spacetimes. If $h(r)=1-\frac{m^{2}}{r}+\frac{c^{2}%
}{r^{2}}$ for certain constant $m$ and $c$, then we get the
Reissner-Nordstr\"om spacetime (the Schwarzschild exterior in the case $c=0$)
and if $h(r)=1-\frac{m^{2}}{r}+kr^{2}$, then we obtain the De
Sitter-Schwarzschild spacetime (Minkowski, De Sitter or anti-De Sitter if
$m=0$ and $k=0$, $k>0$ or $k<0$ respectively).

We first need to know how many different twisted decomposition admits the
spatial part of (\ref{familiaestaticos}) to apply Theorem \ref{estatico} to
these spacetimes.

\begin{lemma}
\label{unicidad} Let $F$ be the warped product $\left(  I\times\mathbb{S}%
^{2},ds^{2}+\mu(s)^{2}g_{0}\right)  $. If there exists a different
decomposition of $F$ as a twisted product in a neighborhood of a point, then
$F$ has constant curvature in this neighborhood.
\end{lemma}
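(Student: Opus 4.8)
The statement asserts a rigidity phenomenon: the warped product $F = \left(I\times\mathbb{S}^2, ds^2 + \mu(s)^2 g_0\right)$ has, generically, an essentially unique twisted-product-with-one-dimensional-base structure, and if a genuinely different one appears then $F$ must be a space form locally. The natural strategy is to suppose we have two such decompositions and play their geometric consequences against one another through the curvature tensor. One decomposition is the given warped one, with distinguished unit parallel closed vector field $\partial_s$ whose orthogonal leaves are (rescaled) round $2$-spheres. Suppose $E$ is the unit vector field $\partial_u$ of a second twisted decomposition $\left(J'\times S', du^2 + \nu(u,w)^2 g_{S'}\right)$; by the computation recalled in Example \ref{productoesferas}, the sectional curvature of any plane containing $\partial_s$ equals $-\mu''(s)/\mu(s)$, a function of $s$ alone, and the sectional curvature of any plane containing $E$ equals $-\tfrac{1}{\nu}\mathrm{Hess}_{\nu}(E,E)$, so it too is determined by the single direction $E$ at each point.

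**Key steps.** First I would write down the curvature tensor of $F$ explicitly in the warped coordinates: with $k$ the curvature of the round $\mathbb{S}^2$ (which we may take $=1$), the nonzero sectional curvatures are $K(\partial_s, V) = -\mu''/\mu$ for $V$ tangent to the sphere factor, and $K$ of the tangent plane to the sphere leaf equals $(k - \mu'^2)/\mu^2 = (1-\mu'^2)/\mu^2$. So at each point $F$ has (at most) two distinct sectional curvature values, call them $\lambda = -\mu''/\mu$ and $\kappa = (1-\mu'^2)/\mu^2$; the $\lambda$-planes are exactly those containing $\partial_s$, and the unique $\kappa$-plane is the sphere-leaf tangent plane $\partial_s^\perp$. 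The second key step: analyze where $E_p$ can point. If at some point $\lambda(p)\neq\kappa(p)$, then the set of planes realizing the curvature value that is "generic at $p$" pins down the direction: a direction $v$ lies in more than one plane of a given curvature value only in degenerate configurations, and a short eigenvalue-multiplicity argument for the curvature operator $R(\cdot,v)v$ forces $E_p$ to be parallel to $\pm\partial_s|_p$. (Concretely: $R(\cdot, v)v$ is a self-adjoint operator on $v^\perp$; its eigenvalues are the sectional curvatures of the planes through $v$; for $v$ generic these are $\{\lambda,\lambda,\kappa\}$ or $\{\lambda,\kappa,\kappa\}$ — but for $v=\partial_s$ they are $\{\lambda,\lambda\}$ on the $2$-dimensional orthogonal space, a different multiplicity pattern, so the "special" direction is detected invariantly.) Propagating this along the integral curves of $E$, a second decomposition agreeing with the first on an open set cannot be "different" there. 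Hence on the region where $\lambda\neq\kappa$ any genuinely different decomposition is impossible, so a different decomposition can only exist if $\lambda\equiv\kappa$ on a neighborhood. Finally, when $\lambda(s)=\kappa(s)$ identically, i.e. $-\mu''/\mu = (1-\mu'^2)/\mu^2$, all sectional curvatures coincide at each point and are equal to a function of $s$; by Schur's lemma (dimension $\geq 3$) this common value is a constant, so $F$ has constant curvature on that neighborhood, which is the claim.

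**Main obstacle.** The delicate point is the eigenvalue-multiplicity / direction-rigidity argument: I need to be careful that the curvature operator $R(\cdot,v)v$ genuinely distinguishes $\partial_s$ from all other directions whenever $\lambda\neq\kappa$, and to rule out sporadic directions $v$ (those with $g_F(v,\partial_s)\neq 0,\pm|v|$) for which the three sectional curvatures through $v$ might conspire. A clean way is: for arbitrary unit $v$ write $v = \cos\alpha\,\partial_s + \sin\alpha\, V$ with $V$ a unit sphere-tangent vector, compute that the eigenvalues of $R(\cdot,v)v$ on $v^\perp$ are $\lambda$ (with eigenvector the sphere-tangent direction orthogonal to $V$) and $\kappa\cos^2\alpha + \lambda\sin^2\alpha$ — wait, one must recompute — in any case the point is that the only direction for which this operator is a scalar multiple of the identity on the full orthogonal complement is $v=\partial_s$ (giving eigenvalue $\lambda$ with multiplicity $2$) unless $\lambda=\kappa$. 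So the structural vector field of any twisted decomposition is forced to be $\pm\partial_s$ off the locus $\{\lambda=\kappa\}$, and on an open subset of that locus we get constant curvature. I would also invoke that the second decomposition's base vector field $E$ is closed with totally umbilic orthogonal leaves (as in the proof of Theorem \ref{umbilicGRW}), which is what makes the sectional curvature formula $K = -\tfrac1\nu\mathrm{Hess}_\nu(E,E)$ for planes through $E$ available in the first place.
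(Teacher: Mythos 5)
Your proposal is correct and is essentially the paper's own argument: both hinge on the fact that every plane containing the unit closed vector field $E$ of a twisted decomposition has the same sectional curvature, compute that planes through a direction making angle $\alpha$ with $\partial_s$ carry the curvatures $\lambda=-\mu''/\mu$ and $\cos^2\alpha\,\lambda+\sin^2\alpha\,\kappa$ with $\kappa=(1-\mu'^2)/\mu^2$, and conclude that a decomposition with $E$ not parallel to $\partial_s$ forces $\lambda=\kappa$, i.e. $\mu''=(\mu'^2-1)/\mu$. The only (cosmetic) difference is the finish: you invoke pointwise isotropy plus Schur's lemma, while the paper solves the ODE explicitly ($\mu$ proportional to $\sinh$, $\sin$ or affine) and reads off constant curvature; your side remark about eigenvalue lists of length three on $v^{\perp}$ is a slip in this three-dimensional setting, but the correct two-eigenvalue statement is what your argument actually uses.
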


\begin{proof}
A twisted decomposition is characterized by the existence of a unitary, closed
and orthogonally conformal vector field, \cite{GutOle09}. Therefore, if there
exists a different decomposition as a twisted product in a neighborhood
$\theta\subset F$, there is a vector field $E$ with these properties and
linearly independent with $\partial_{s}$ in $\theta$. Hence, any plane
containing $E$ has the same sectional curvature. Suppose that $E=\alpha
\partial_{s}+U$ with $U_{(s,z)}\in T_{z}\mathbb{S}^{2}$ for all $(s,z)\in
\theta$ and consider the planes $\Pi_{0}=span\left(  E,\partial_{s}\right)  $
and $\Pi_{1}=\left(  E,V\right)  $ where $V_{(s,z)}\in T_{z}\mathbb{S}^{2}$
with $V\perp U$. A straightforward computation shows that
\begin{align*}
K(\Pi_{0}) &  =-\frac{\mu^{\prime\prime}}{\mu},\\
K\left(  \Pi_{1}\right)   &  =-\alpha^{2}\frac{\mu^{\prime\prime}}{\mu}%
+\frac{\left(  1-\alpha^{2}\right)  \left(  1-\mu^{\prime2}\right)  }{\mu^{2}%
}.
\end{align*}

Therefore, since $K(\Pi_{0})=K(\Pi_{1})$, it holds $\mu^{\prime\prime}%
=\frac{\mu^{\prime2}-1}{\mu}$. The solutions to this differential equation are
$\mu(s)=\frac{1}{k}\sinh\left(  ks+s_{0}\right)  $, $\mu(s)=\frac{1}{k}%
\sin\left(  ks+s_{0}\right)  $ or $\mu(s)=\pm s+s_{0}$ but for these warping
functions the sectional curvature of $ds^{2}+\mu(s)^{2}g_{0}$ is $-k^{2}$,
$k^{2}$ or $0$ respectively.
\end{proof}

\begin{theorem}
\label{Teor2}If $h\in C^{\infty}(I)$ is a positive function such that
$h^{\prime\prime\prime}\not \equiv 0$ in any open subset of $I$, then the
spacetime given by (\ref{familiaestaticos}) has exactly two totally umbilic
null hypersurface through each point.
\end{theorem}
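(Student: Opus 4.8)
The plan is to reduce the statement, via Theorem~\ref{estatico}, to counting twisted decompositions of the conformal spatial metric, and then to apply Lemma~\ref{unicidad}. First I would view (\ref{familiaestaticos}) as a standard static space $F\times_\phi\mathbb{R}$ with $F=I\times\mathbb{S}^2$, $g_F=\frac{1}{h(r)}dr^2+r^2g_0$ and $\phi=\sqrt{h(r)}$. Then the conformal spatial metric is $\frac{1}{\phi^2}g_F=\frac{1}{h(r)^2}dr^2+\frac{r^2}{h(r)}g_0$, and the change of variable $s=\int\frac{dr}{h(r)}$ (a diffeomorphism, since $h>0$) turns it into $\bigl(\tilde I\times\mathbb{S}^2,\,ds^2+\mu(s)^2g_0\bigr)$ with $\mu(s)=r(s)/\sqrt{h(r(s))}$ --- precisely the situation of Lemma~\ref{unicidad}. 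Applying the converse part of Theorem~\ref{estatico} to this warped decomposition, through every point of the spacetime there passes the totally umbilic null hypersurface $L=\{(s,z,s+t_0)\}$, and by Remark~\ref{hipersuperficiedualestatico} also its dual $\widetilde L=\{(s,z,-s+t_0)\}$; these have opposite $t$-behaviour, hence are distinct, so there are at least two through each point.

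For the other inequality I would take an arbitrary totally umbilic null hypersurface $L'$ through a point $(x_0,t_0)$, with $x_0$ corresponding to $(s_0,z_0)$. By the direct part of Theorem~\ref{estatico}, $L'$ induces a twisted decomposition of $\bigl(\tilde I\times\mathbb{S}^2,ds^2+\mu^2g_0\bigr)$ near $(s_0,z_0)$, equivalently a unit, closed, orthogonally conformal vector field $E'$. If $E'$ were linearly independent from $\partial_s$ on some open set, Lemma~\ref{unicidad} would force $ds^2+\mu^2g_0$ to have constant curvature there; comparing the curvature of a plane containing $\partial_s$, which is $-\mu''/\mu$, with that of a plane tangent to the $\mathbb{S}^2$ factor, which is $(1-\mu'^2)/\mu^2$, constant curvature is equivalent to $\mu''=(\mu'^2-1)/\mu$ on that set. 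A chain-rule computation using $dr/ds=h(r)$ and $\mu^2=r^2/h$ rewrites this ODE as the linear equation
\[
r^2h''(r)-2rh'(r)+2h(r)-2=0,
\]
whose general solution is $h(r)=1+\alpha r+\beta r^2$; in particular $h'''\equiv0$ on the corresponding $r$-interval, contradicting the hypothesis. Hence $E'$ is proportional to $\partial_s$ on a neighbourhood of $(s_0,z_0)$, and being a unit field, $E'=\partial_s$ or $E'=-\partial_s$ there. Applying Theorem~\ref{estatico} once more --- directly when $E'=\partial_s$, after the reparametrization $u=-s$ when $E'=-\partial_s$ --- gives $L'=L$ or $L'=\widetilde L$; so exactly two totally umbilic null hypersurfaces pass through $(x_0,t_0)$.

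I expect the one genuinely computational point to be the passage from $\mu''=(\mu'^2-1)/\mu$ to $r^2h''-2rh'+2h-2=0$: it is only the chain rule together with $\mu^2=r^2/h$, but the repeated use of $d/ds=h\,d/dr$ has to be handled carefully. Everything else is bookkeeping: identifying (\ref{familiaestaticos}) as a static space, the two reductions through Theorem~\ref{estatico}, the appeal to Lemma~\ref{unicidad}, solving the resulting Euler-type ODE, and checking that $L$ and $\widetilde L$ are genuinely different. One small subtlety to flag is that proportionality of $E'$ and $\partial_s$ at the single point $(s_0,z_0)$ is not enough --- one needs it on a neighbourhood --- but this is automatic, since otherwise $E'$ and $\partial_s$ are independent on a nearby open set, which already yields the contradiction above.
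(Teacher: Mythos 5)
Your proposal is correct and follows essentially the same route as the paper: identify the spacetime as $F\times_{\sqrt{h}}\mathbb{R}$, pass to the conformal metric $\frac{1}{h}g_F=ds^2+\mu(s)^2g_0$ with $\mu=r/\sqrt{h}$, and use Lemma \ref{unicidad} plus the hypothesis $h'''\not\equiv0$ to conclude the warped decomposition is unique, so Theorem \ref{estatico} and Remark \ref{hipersuperficiedualestatico} give exactly two hypersurfaces through each point. The only (correct) cosmetic difference is that you solve the constant-curvature condition as the Euler equation $r^2h''-2rh'+2h-2=0$, whereas the paper differentiates it once to get $\left(\mu''/\mu\right)'=-\tfrac{1}{2}h^2h'''$ directly; your extra bookkeeping with the unit closed orthogonally conformal field $E'=\pm\partial_s$ just makes explicit the counting step the paper leaves terse.
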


\begin{proof}
Call $\left(  F,g_{F}\right)  =\left(  I\times\mathbb{S}^{2},\frac{1}%
{h(r)}dr^{2}+r^{2}g_{0}\right)  $. If we take a function $\varphi$ such that
$\varphi^{\prime}(s)=h\left(  \varphi(s)\right)  $ and we make the coordinate
change $r=\varphi(s)$, then $\frac{1}{h}g_{F}$ is written as $ds^{2}%
+\mu(s)^{2}g_{0}$, where $\mu(s)=\frac{\varphi(s)}{\sqrt{h\left(
\varphi(s)\right)  }}$. Since $\left(  \frac{\mu^{\prime\prime}}{\mu}\right)
^{\prime}=-\frac{h^{2}h^{\prime\prime\prime}}{2}$, there is not any open
neighborhood in $\left(  F,\frac{1}{h}g_{F}\right)  $ with constant curvature.
Applying the above lemma, this warped decomposition is unique. Therefore,
using Theorem \ref{estatico} and Remark \ref{hipersuperficiedualestatico}, for
each point there are exactly two totally umbilic null hypersurfaces.
\end{proof}

\begin{corollary}
In a De Sitter-Schwarzschild with $m\neq0$ and in a Reissner-Nordstr\"{o}m
spacetime there are exactly two totally umbilic non-totally geodesic null
hypersurface through each point.
\end{corollary}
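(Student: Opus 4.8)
The plan is to show that the corollary follows directly from Theorem \ref{Teor2} together with an elementary computation identifying the warping function $h$ for the two spacetimes in question and checking that it satisfies the hypothesis $h''' \not\equiv 0$ on every open subset, and then verifying that neither of the two totally umbilic null hypersurfaces is totally geodesic.

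First I would recall from the paragraph preceding the corollary that the Reissner-Nordstr\"om spacetime corresponds to $h(r) = 1 - \frac{m^2}{r} + \frac{c^2}{r^2}$ (with $c=0$ giving Schwarzschild exterior) and the De Sitter-Schwarzschild spacetime to $h(r) = 1 - \frac{m^2}{r} + kr^2$. In both cases I would compute $h'''$ explicitly: for $h(r) = 1 - \frac{m^2}{r} + \frac{c^2}{r^2}$ one gets $h'''(r) = -\frac{6m^2}{r^4} + \frac{24 c^2}{r^5}$, which vanishes only at the isolated point $r = \frac{4c^2}{m^2}$ (and nowhere if $c=0$ and $m\neq 0$), hence $h''' \not\equiv 0$ on any open subinterval; for $h(r) = 1 - \frac{m^2}{r} + kr^2$ one gets $h'''(r) = -\frac{6m^2}{r^4}$, which is nowhere zero when $m \neq 0$. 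Thus the hypothesis of Theorem \ref{Teor2} is met, and that theorem gives exactly two totally umbilic null hypersurfaces through each point.

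It then remains to argue that these are non-totally-geodesic. Here I would invoke Corollary \ref{corobs}: a totally geodesic null hypersurface would force the spatial fibre $\left(I\times\mathbb{S}^2, \frac{1}{h}g_F\right)$ to admit a local warped (not merely twisted) product decomposition with one-dimensional base, which in turn — via the computation in the proof of Theorem \ref{Teor2}, where $\left(\frac{\mu''}{\mu}\right)' = -\frac{h^2 h'''}{2} \not\equiv 0$ — is already excluded since the unique decomposition $ds^2 + \mu(s)^2 g_0$ has non-constant $\frac{\mu''}{\mu}$, so $\mu$ cannot be linear in $s$; alternatively, and more cleanly, from Equation (\ref{curvaturamediaestatico1}) the null mean curvature of a totally geodesic hypersurface would have to vanish identically, i.e. $\mu\phi$ would be constant along the base direction, and since $\phi = \sqrt{h(r)}$ and $\mu = \frac{\varphi}{\sqrt{h(\varphi)}}$ (in the notation of the proof of Theorem \ref{Teor2}), $\mu\phi = \varphi(s)$ which is strictly monotone, a contradiction. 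Either route shows both hypersurfaces are non-totally-geodesic.

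The main obstacle — really the only nontrivial point — is bookkeeping: one must be careful that the hypothesis "$h''' \not\equiv 0$ on any open subset" is genuinely satisfied, i.e. that the zero set of $h'''$ has empty interior, rather than merely being nonempty; the computations above confirm this since $h'''$ is in each case a nonzero rational function of $r$, whose zero set is finite. Given that, the corollary is an immediate specialization of Theorem \ref{Teor2} combined with the observation on totally geodesic hypersurfaces, so no further machinery is needed.
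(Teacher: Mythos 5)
Your overall plan (check that $h'''$ vanishes only on a set with empty interior so that Theorem \ref{Teor2} applies, then rule out total geodesy through the null mean curvature) is the right one, and your verification of the hypothesis of Theorem \ref{Teor2} is essentially correct (up to harmless sign slips: $h'''(r)=6m^{2}/r^{4}-24c^{2}/r^{5}$ and $h'''(r)=6m^{2}/r^{4}$, but only the zero set matters). However, your first route for non-geodesy, via Corollary \ref{corobs}, does not work. That corollary obstructs totally geodesic null hypersurfaces only when the fibre admits \emph{no} local warped decomposition with one-dimensional base; here the relevant fibre $\left(I\times\mathbb{S}^{2},\frac{1}{h}g_{F}\right)$ \emph{does} admit one, namely $ds^{2}+\mu(s)^{2}g_{0}$ with $\mu=\varphi/\sqrt{h\circ\varphi}$ --- this is exactly the decomposition the whole argument is built on. Non-constancy of $\mu''/\mu$ (or $\mu$ failing to be linear in $s$) is not the criterion for excluding totally geodesic hypersurfaces in the static metric; by Theorem \ref{estatico} and Remark \ref{hipersuperficiedualestatico} the criterion is the identical vanishing of $\frac{d}{ds}\ln(\mu\phi)$, respectively $\frac{d}{ds}\ln(\phi/\mu)$.

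Your second, ``cleaner'' route is sound but incomplete: it only treats one of the two hypersurfaces. Equation (\ref{curvaturamediaestatico1}) governs $L=\{(s,z,s+t_{0})\}$, and your observation that $\mu\phi=\varphi$ with $\varphi'=h(\varphi)>0$ indeed gives $H^{\ast}=2\frac{d}{ds}\ln\varphi\neq0$. But the second hypersurface through the point is the dual $\widetilde{L}$, whose mean curvature is given by Equation (\ref{curvaturamediaestatico2}), $\widetilde{H}^{\ast}=2\frac{d}{ds}\ln\left(\frac{\phi}{\mu}\right)=2\frac{d}{ds}\ln\left(\frac{h(\varphi)}{\varphi}\right)$, and the monotonicity of $\varphi$ says nothing about this quantity. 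This is precisely the part the paper checks separately: one must see that $h(r)/r$ is not constant on any interval (immediate for the Reissner--Nordstr\"{o}m and De Sitter--Schwarzschild functions by multiplying $h(r)=\lambda r$ by $r^{2}$ and comparing coefficients), so that $\widetilde{H}^{\ast}\not\equiv0$. Alternatively you could note that the isometry $t\mapsto-t$ of the static metric carries $\widetilde{L}$ to a hypersurface of the first type, so your computation for $H^{\ast}$ also excludes total geodesy of the dual; but some such argument must be supplied --- as written, the assertion ``either route shows both hypersurfaces are non-totally-geodesic'' is not justified for the dual.
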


\begin{proof}
From Equations (\ref{curvaturamediaestatico1}) and
(\ref{curvaturamediaestatico2}), the null mean curvatures are $H^{\ast}%
=2\frac{d}{ds}\ln\varphi$ and $\widetilde{H}^{\ast}=2\frac{d}{ds}\ln\left(
\frac{h}{\varphi}\right)  $, which are not identically zero.
\end{proof}

\begin{remark}
If we consider the Schwarzschild exterior embedded in the Kruskal spacetime
$Q\times_{r}\mathbb{S}^{2}$, the totally umbilic null hypersurfaces claimed in
the above corollary are given by
\[
\{(u,v,x)\in Q\times\mathbb{S}^{2}:u=u_{0}\}
\]
and%
\[
\{(u,v,x)\in Q\times\mathbb{S}^{2}:v=v_{0}\}.
\]

\end{remark}

\noindent
\textbf{Manuel Guti\'errez.}\\
m\_gutierrez@uma.es\\ Dep. \'{A}lgebra, Geometr\'{\i}a y Topolog\'{\i}a.\\Universidad de M\'{a}laga.M\'{a}laga. Spain

\noindent
\textbf{Benjam\'in Olea.}\\
 benji@uma.es\\ Dep. Matem\'{a}tica Aplicada.\\Universidad de M\'{a}laga, M\'{a}laga, Spain

\end{document}